\DeclareMathOperator{\Hom}{Hom}
\DeclareMathOperator{\Gal}{Gal}
\DeclareMathOperator{\imm}{im}
\DeclareMathOperator{\res}{res}
\DeclareMathOperator{\Char}{char}
\DeclareMathOperator{\Fun}{Fun}
\let\H\relax
\DeclareMathOperator{\H}{H}
\newtheorem{theorem}{Theorem}
\newtheorem{proposition}{Proposition}[section]
\newtheorem{lemma}[proposition]{Lemma}
\newtheorem{corollary}[proposition]{Corollary}
\theoremstyle{definition}
\newtheorem{remark}[proposition]{Remark}
\numberwithin{equation}{section}
\newcommand{\hotimes}{\widehat\otimes}
\newcommand{\Z}{\mathbb{Z}}
\newcommand{\Q}{\mathbb{Q}}
\newcommand{\Kbb}{\mathbb{K}}
\newcommand{\Gc}{\mathcal{G}}
\newcommand{\Mcal}{\mathcal{M}}
\newcommand{\Hcal}{\mathcal{H}}
\newcommand{\Kfr}{\mathfrak{K}}
\newcommand{\Sfr}{\mathfrak{S}}
\newcommand{\Ffrak}{\mathfrak{F}}
\newcommand{\gfrak}{\mathfrak{g}}
\newcommand{\Gfrak}{\mathfrak{G}}
\newcommand{\Hfrak}{\mathfrak{H}}
\newcommand{\Lbb}{\mathbb{L}}
\newcommand{\tsigma}{\tilde\sigma}
\newcommand{\ttau}{\tilde\tau}
\renewcommand{\inf}{\operatorname{inf}}
\begin{document}
\title{Abelian-by-Central Galois Groups of Fields I:\\ a Formal Description}
\author{Adam Topaz}
\thanks{This research was supported by NSF postdoctoral fellowship DMS-1304114.}
\address{Adam Topaz \vskip 0pt
Department of Mathematics \vskip 0pt
University of California, Berkeley \vskip 0pt
970 Evans Hall \#3840 \vskip 0pt
Berkeley, CA. 94720-3840 \vskip 0pt
USA}
\email{atopaz@math.berkeley.edu}
\urladdr{http://math.berkeley.edu/~atopaz}
\date{\today}
\subjclass[2010]{Primary: 12G, 12F. Secondary: 12G05, 12F10.}

\begin{abstract}
Let $K$ be a field whose characteristic is prime to a fixed integer $n$ with $\mu_n \subset K$, and choose $\omega \in \mu_n$ a primitive $n$th root of unity.
Denote the absolute Galois group of $K$ by $\Gal(K)$, and the mod-$n$ central-descending series of $\Gal(K)$ by $\Gal(K)^{(i)}$.
Recall that Kummer theory, together with our choice of $\omega$, provides a functorial isomorphism between $\Gal(K)/\Gal(K)^{(2)}$ and $\Hom(K^\times,\Z/n)$.
Analogously to Kummer theory, in this note we use the Merkurjev-Suslin theorem to construct a continuous, functorial and explicit embedding $\Gal(K)^{(2)}/\Gal(K)^{(3)} \hookrightarrow \Fun(K\smallsetminus\{0,1\},(\Z/n)^2)$, where $\Fun(K\smallsetminus\{0,1\},(\Z/n)^2)$ denotes the group of $(\Z/n)^2$-valued \emph{functions} on $K\smallsetminus\{0,1\}$.
We explicitly determine the functions associated to the image of commutators and $n$th powers of elements of $\Gal(K)$ under this embedding.
We then apply this theory to prove some new results concerning relations between elements in abelian-by-central Galois groups.
\end{abstract}

\maketitle

\section{Introduction}

In recent years, it has become increasingly evident that much of the arithmetic and geometric information which is encoded in very large Galois groups (e.g. maximal pro-$p$ Galois groups and absolute Galois groups) is already encoded in much smaller quotients and specifically in so-called ``abelian-by-central'' quotients.
Recall that the mod-$n$ abelian-by-central Galois group of a field $K$ is the maximal Galois group of $K$ which is a central extension of an exponent-$n$ abelian group by an exponent-$n$ abelian group.
In other words, the mod-$n$ abelian-by-central Galois group of $K$ is the quotient of its absolute Galois group associated to the 3rd term in the mod-$n$ central descending series.

One instance of this phenomenon comes from valuation theory.
It has been known for several years that one can detect valuations of a given field using ``large'' Galois groups.
More precisely, it was shown by {\sc Engler-Nogueira} \cite{Engler1994} and {\sc Efrat} \cite{Efrat1995} for $p=2$, and by {\sc Engler-Koenigsmann} \cite{Engler1998} for $p > 2$, that one can detect inertia/decomposition groups of tamely-branching valuations in the maximal pro-$p$ Galois group of a field $K$ of characteristic different from $p$ with $\mu_p \subset K$. 
Also in the same direction, {\sc Koenigsmann} \cite{Koenigsmann2003} shows how one can detect inertia/decomposition groups of valuations in absolute Galois groups of arbitrary fields. 
On the other hand, similar valuation-theoretic data is already encoded in abelian-by-central Galois groups.
It was shown by {\sc Bogomolov-Tschinkel} \cite{Bogomolov2007} and {\sc Pop} \cite{Pop2010} that one can detect \emph{abelian} inertia/decomposition groups of valuations using the pro-$p$ abelian-by-central Galois group of a function field over an \emph{algebraically closed field}.
More recently, {\sc Efrat-Min\'a\v{c}} \cite{Efrat2011a} proved that the mod-$p$ abelian-by-central Galois group encodes the existence (or non-existence) of a tamely-branching $p$-\emph{Henselian} valuation in a field $K$ of characteristic $\neq p$ with $\mu_p \subset K$.
Finally, it was shown by the author in {\sc Topaz} \cite{Topaz2012c} that the mod-$p^n$ abelian-by-central Galois group (for arbitrary $n$) of a field $K$ encodes abelian inertia/decomposition groups of almost \emph{arbitrary} valuations, as long as $\Char K \neq p$ and $K$ contains sufficiently many roots of unity.

Further instances of this phenomenon arise from Galois cohomology.
With mod-$2$ Galois cohomology, it was shown by {\sc Min\'a\v{c}-Spira} \cite{Minac1996} that the mod-$2$ abelian-by-central Galois group of a field $K$ can be seen as a ``Galois-theoretical analogue'' of the Witt ring of quadratic forms of $K$.
More recently, {\sc Chebolu-Efrat-Min\'a\v{c}} \cite{Chebolu2009} proved that one can recover the mod-$p^n$ Galois cohomology of a field $K$, endowed with the cup product and Bockstein morphism, as the \emph{decomposable} part of the mod-$p^n$ cohomology of the mod-$p^n$ abelian-by-central Galois group, as long as $K$ has characteristic different from $p$ with $\mu_{p^n} \subset K$; loc.cit. also gives a \emph{non-functorial} construction which determines the mod-$p^n$ abelian-by-central Galois group from the lower mod-$p^n$ Galois cohomology groups of the field.
See also {\sc Efrat-Min\'a\v{c}} \cite{Efrat2011c} for related results in this direction.
Along similar lines, the abelian-by-central Galois group, and its natural \emph{meta-abelian} generalizations, encode a lot of information about important Galois modules which naturally arise from \emph{strictly larger} quotients of the absolute Galois group via Kummer theory and Galois cohomology, as was shown by {\sc Min\'a\v{c}-Swallow-Topaz} \cite{Topaz2012a}.

The strongest instance of this phenomenon, however, is a conjecture in birational anabelian geometry, which was first proposed by {\sc Bogomolov} \cite{Bogomolov1991} (see also {\sc Pop} \cite{Pop2011} for a precise functorial formulation), whose goal is to recover a function field $K$ of dimension $\geq 2$ over an algebraically closed field from its pro-$p$ abelian-by-central Galois group. 
If successful, this conjecture would go far beyond Grothendieck's original (birational) anabelian conjectures (see \cite{Grothendieck}) since it deals with fields of \emph{purely geometric} nature, and with abelian-by-central Galois groups, which are ``almost-abelian,'' as opposed to absolute Galois groups.
While Bogomolov's conjecture is open in general, it has been proven for function fields of transcendence degree $\geq 2$ over the \emph{algebraic closure of a finite field} by {\sc Bogomolov-Tschinkel} \cite{Bogomolov2008a}, \cite{Bogomolov2011} and by {\sc Pop} \cite{Pop2003}, \cite{Pop2010}, \cite{Pop2011}.
A weaker version of this conjecture, which uses large Galois groups instead of abelian-by-central ones, was also resolved for function fields over $\bar \Q$, in transcendence degree $2$ by {\sc Silberstein} \cite{Silberstein2012} and in transcendence degree $\geq 3$ by {\sc Pop} \cite{Pop2011b}.
It is also important to note that the abelian-by-central results in valuation theory mentioned above (\cite{Bogomolov2007} and \cite{Pop2010} in particular) play a central role in the proof of the known cases of this conjecture.

In this note, we begin an investigation of mod-$n$ abelian-by-central Galois group in general.
The main result of this note uses the Merkurjev-Suslin theorem \cite{Merkurjev1982} to provide a \emph{formal} description of mod-$n$ abelian-by-central Galois groups, and can be seen as an extension of Kummer Theory to this non-abelian situation.
This gives a \emph{direct} and very \emph{explicit} link between the arithmetic structure of a field $(K,+,\cdot)$, and the structure of its mod-$n$ abelian-by-central Galois group, without doing arithmetic in any proper extensions of $K$.
More precisely, just as Kummer Theory describes the maximal mod-$n$ \emph{abelian} Galois group of a field $K$ (which contains sufficiently many roots of unity) as the group of homomorphisms $K^\times \rightarrow \Z/n$, in this note we give a description of the ``central'' part (i.e. the \emph{non-abelian} part) of the mod-$n$ abelian-by-central Galois group in terms of special \emph{functions} $K\smallsetminus\{0,1\} \rightarrow (\Z/n)^2$.
This new description is functorial in $K$ and is compatible with Kummer theory via taking commutators of pairs of elements and raising elements to $n$th powers.
See Theorem \ref{thm:main-thm-ffrak} below for the precise statement.

We use this formal description of the mod-$n$ abelian-by-central Galois group to determine the existence of certain types of relations in such groups only by considering \emph{Heisenberg Quotients} -- that is, homomorphisms from the mod-$n$ abelian-by-central Galois group of a field to the Heisenberg Group over $\Z/n$.
Similar results in this direction were shown by {\sc Efrat-Min\'a\v{c}} \cite{Efrat2011c}, \cite{Efrat2011b} in the mod-$p$ case.
See Corollary \ref{cor:cor-from-intro}, \S\ref{sec:an-exampl-heis}, \S\ref{sec:applications} and Theorem \ref{thm:application-to-relations} for more details.

Finally, using the results involving homomorphisms to the Heisenberg group, we conclude this note by observing that the notion of a \emph{commuting-liftable} pair from \cite{Topaz2012c} is equivalent to an \emph{a priori} more general notion, which we call \emph{weakly-commuting-liftable}, whenever one deals with mod-$n$ abelian-by-central Galois groups of a field.
This thereby generalizes the main results of loc.cit. which detect valuations using mod-$p^n$ abelian-by-central Galois groups, to even more minimal situations.
In addition to this observation, the main results of this note can be seen as a direct generalization of \S7 of loc.cit.
See \S\ref{sec:aappl-theor-comm} and Remark \ref{remark:cl-vs-wcl} of this paper for more details concerning commuting-liftable vs. weakly-commuting-liftable pairs.

\subsection{Notation and the Main Theorem}
\label{sec:notat-main-theor}
Throughout the note, we will only consider continuous functions between  discrete and/or  profinite sets.
We will use this convention with impunity, and write ``$\Hom$'' for the set of continuous homomorphisms, ``$\Fun$'' for the set of continuous functions, ``$f : A \rightarrow B$'' for a continuous map $A$ to $B$, etc. 

Let $K$ be a field and denote by $\Gal(K)$ its absolute Galois group.
Let $n$ be a positive integer which is relatively prime to $\Char K$ and assume that $\mu_n \subset K$.
Throughout the note we will deal with $\Z/n$ as our ring of coefficients, and we will denote it by $\Lambda$.

We recall that the mod-$n$ central descending series of a profinite group $\Gfrak$ is defined inductively as follows:
\begin{enumerate}
\item $\Gfrak^{(1)} = \Gfrak$.
\item $\Gfrak^{(i+1)} = [\Gfrak,\Gfrak^{(i)}] \cdot (\Gfrak^{(i)})^n$.
\end{enumerate}
In other words, for $i \geq 1$, $\Gfrak^{(i+1)}$ is precisely the left kernel of the canonical pairing:
\[ \Gfrak^{(i)} \times \Hom_{\Gfrak}(\Gfrak^{(i)},\Lambda) \rightarrow \Lambda \]
where $\Gfrak$ acts on $\Gfrak^{(i)}$ via conjugation, trivially on $\Lambda$, and $\Hom_\Gfrak(\Gfrak^{(i)},\Lambda)$ denotes the set of $\Gfrak$-equivariant homomorphisms $\Gfrak^{(i)} \rightarrow \Lambda$.
For $i \geq 1$, we denote $\Gfrak^{(i)}/\Gfrak^{(i+1)}$ by  $\gfrak_i(\Gfrak)$.
When $\Gfrak$ is understood from context, we will simplify the notation and denote $\gfrak_i(\Gfrak)$ simply by $\gfrak_i$.
Also in the case where $\Gfrak = \Gal(K)$, the absolute Galois group of a field $K$ as above, we denote $\gfrak_i(\Gal(K))$ by $\gfrak_i(K)$.
We will generally use additive notation for $\gfrak_i$ (the exception to this is in \S\ref{sec:cocycle-calculations}); in certain cases where we might want to consider $\gfrak_i$ as a multiplicative group, we will write $\Gfrak^{(i)}/\Gfrak^{(i+1)}$ instead of $\gfrak_i$.

For $\sigma,\tau \in \gfrak_1$, we define $[\sigma,\tau] := \tsigma^{-1}\ttau^{-1}\tsigma\ttau$ where $\tsigma,\ttau \in \Gfrak/\Gfrak^{(3)}$ are some lifts of $\sigma,\tau \in \gfrak_1 = \Gfrak/\Gfrak^{(2)}$.
Since $\Gfrak/\Gfrak^{(3)} \twoheadrightarrow \Gfrak/\Gfrak^{(2)}$ is a central extension, $[\sigma,\tau]$ doesn't depend on the choice of lifts $\tsigma,\ttau$ and thus $[\bullet,\bullet] : \gfrak_1 \times \gfrak_1 \rightarrow \gfrak_2$ is well-defined; it is well-known that $[\bullet,\bullet]$ is $\Lambda$-bilinear.
For $\sigma \in \gfrak_1$, we define $\sigma^\pi := \tsigma^n$ where $\tsigma \in \Gfrak/\Gfrak^{(3)}$ is some lift of $\sigma \in \gfrak_1 = \Gfrak/\Gfrak^{(2)}$.
As before, $\sigma^\pi$ doesn't depend on the choice of lift $\tsigma$ and thus $(\bullet)^\pi : \gfrak_1 \rightarrow \gfrak_2$ is a well-defined map.
The map $(\bullet)^\pi$ is not $\Lambda$-linear in general (e.g. if $n$ is even), but $2 \cdot (\bullet)^\pi$ is always $\Lambda$-linear.

Throughout the note we will work with a \emph{fixed} primitive $n$th root of unity $\omega \in \mu_n \subset K$.
This choice of $\omega$ yields an isomorphism of $G_K$-modules $\mu_n \cong \Lambda$, by sending $\omega^i \in \mu_n$ to $i \in \Lambda$, which we tacitly use throughout.
Kummer theory gives us a perfect pairing:
\[ \gfrak_1(K) \times K^\times/n  \rightarrow \mu_n \]
defined by $(\sigma,x) \mapsto \sigma\sqrt[n]{x}/\sqrt[n]{x} \in \mu_n$.
Therefore, we obtain an isomorphism $(\bullet)^\omega : \gfrak_1(K) \rightarrow \Hom(K^\times,\Lambda)$ using the Kummer pairing together with our fixed isomorphism $\mu_n \cong \Lambda$.
Namely, for $\sigma \in \gfrak_1(K)$ and $x \in K^\times$, one has $\sigma^\omega(x) = i$ if and only if $\sigma \sqrt[n]{x}/\sqrt[n]{x} = \omega^i$.

For a discrete set $S$ and a profinite set $P$, we denote by $\Fun(S,P)$ the set of functions $S \rightarrow P$.
Obviously, any function $S \rightarrow P$ is continuous.
Because $P$ is profinite, $\Fun(S,P)=P^S$ obtains a natural profinite topology and this agrees with the compact-open topology of $\Fun(S,P)$.

Let $f,g \in \Hom(K^\times,\Lambda)$ be given.
We now introduce two functions $\Phi^\omega(f,g) : K\smallsetminus\{0,1\} \rightarrow \Lambda^2$ and $\Psi^\omega(f) : K \smallsetminus\{0,1\} \rightarrow \Lambda^2$, defined as follows:
\begin{enumerate}
\item $\Phi^\omega(f,g)(x) = (f(x)g(1-x)-f(1-x)g(x), \ f(x)g(\omega)-f(\omega)g(x))$.
\item $\Psi^\omega(f)(x) = \left({n \choose 2}f(x)f(1-x), \ {n\choose 2}f(x)f(\omega)+f(x)\right)$.
\end{enumerate}
As above, we endow $\Fun(K \smallsetminus\{0,1\},\Lambda^2)$ with the compact-open topology and therefore $\Fun(K\smallsetminus\{0,1\},\Lambda^2) = (\Lambda^2)^{K\smallsetminus\{0,1\}}$ is canonically a profinite group.
Thus, it makes sense to speak about closed subgroups of $\Fun(K \smallsetminus \{0,1\},\Lambda^2)$.
We will denote by $\Ffrak_K^\omega$ the closed subgroup of $\Fun(K\smallsetminus\{0,1\},\Lambda^2)$ which is topologically generated by the functions $\Phi^\omega(f,g)$ and $\Psi^\omega(f)$ as $f,g \in \Hom(K^\times,\Lambda)$ vary.

Suppose that $L|K$ is an extension of fields.
Then the canonically induced map $\phi : \Gal(L) \rightarrow \Gal(K)$ restricts to homomorphisms $\phi^{(i)} : \Gal(L)^{(i)} \rightarrow \Gal(K)^{(i)}$ for all $i \geq 1$.
In particular, we obtain induced homomorphisms $\phi_i : \gfrak_i(L) \rightarrow \gfrak_i(K)$.
These induced maps, for $i = 1,2$, are clearly compatible with $[\bullet,\bullet]$ and $(\bullet)^\pi$ in the sense that, for all $\sigma,\tau \in \gfrak_1(K)$, one has $\phi_2[\sigma,\tau] = [\phi_1\sigma,\phi_1\tau]$ and $\phi_2(\sigma^\pi) = (\phi_1\sigma)^\pi$.

On the other hand, it is clear from the definition that the restriction map $\Fun(L\smallsetminus\{0,1\},\Lambda^2) \rightarrow \Fun(K\smallsetminus\{0,1\},\Lambda^2)$, induced by the inclusion $K \subset L$, restricts to a homomorphism $\res_K : \Ffrak_L^\omega \rightarrow \Ffrak_K^\omega$ which sends $\Phi^\omega(f,g)$ to $\Phi^\omega(f|_{K^\times},g|_{K^\times})$ and $\Psi^\omega(f)$ to $\Psi^\omega(f|_{K^\times})$, for all $f,g \in \Hom(L^\times,\Lambda)$.
Thus $\Ffrak_K^\omega$ is functorial in $K$, as long as we endow each such $K$ with the same fixed primitive $n$th root of unity $\omega$.

We are now ready to introduce the main theorem of the note, which relates the two profinite groups $\gfrak_2(K)$ and $\Ffrak_K^\omega$ in a functorial way.
\begin{theorem}[Main Theorem]
\label{thm:main-thm-ffrak}
Let $K$ be a field whose characteristic is prime to $n$ with $\mu_n\subset K$.
Choose $\omega \in \mu_n$ a fixed primitive $n$th root of unity.
Then there is a canonical isomorphism $\Omega_K : \gfrak_2(K) \rightarrow \Ffrak_K^\omega$ which satisfies:
\begin{enumerate}
\item $\Omega_K([\sigma,\tau]) = \Phi^\omega(\sigma^\omega,\tau^\omega)$.
\item $\Omega_K(\sigma^\pi) = \Psi^\omega(\sigma^\omega)$.
\end{enumerate}
Moreover, this isomorphism is functorial in $K$ in the sense that, if $L|K$ is a field extension, then the canonical map $\gfrak_2(L) \rightarrow \gfrak_2(K)$ corresponds to the restriction map $\Ffrak_L^\omega \rightarrow \Ffrak_K^\omega$ induced by $K \subset L$.
Namely, the following diagram commutes:
\[
\xymatrix{
\gfrak_2(L) \ar[d]_{\rm canon.}\ar[r]^{\Omega_L} &\Ffrak_L^\omega \ar[d]^{\res_K} \ar@{^{(}->}[r]^-{\rm incl.} & \Fun(L\smallsetminus\{0,1\},\Lambda^2)\ar[d]^{(K \subset L)^*}\\
\gfrak_2(K) \ar[r]_{\Omega_K} &\Ffrak_K^\omega \ar@{^{(}->}[r]_-{\rm incl.} & \Fun(K\smallsetminus\{0,1\},\Lambda^2) \\
}
\]
\end{theorem}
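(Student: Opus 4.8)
The plan is to prove the theorem by dualizing: rather than constructing $\Omega_K$ directly, I would first compute the Pontryagin dual $\Hom(\gfrak_2(K),\Lambda)$ cohomologically and then recover $\Omega_K$ by duality. Write $G = \Gal(K)$. Applying the inflation--restriction (five-term) exact sequence to the extension $1 \to G^{(2)} \to G \to \gfrak_1 \to 1$, and using that $\H^1(\gfrak_1,\Lambda) \to \H^1(G,\Lambda)$ is an isomorphism (so the restriction to $G^{(2)}$ vanishes), the transgression yields an isomorphism
\[
\Hom(\gfrak_2(K),\Lambda) \;=\; \Hom_G(G^{(2)},\Lambda) \;\xrightarrow{\ \sim\ }\; \ker\bigl(\inf \colon \H^2(\gfrak_1,\Lambda) \to \H^2(G,\Lambda)\bigr),
\]
where the equality on the left is exactly the defining property of $G^{(3)}$ (equivariant homomorphisms factor through $\gfrak_2$, on which $G$ acts trivially). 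Thus the first task is to make both $\H^2(\gfrak_1,\Lambda)$ and the inflation map completely explicit.

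For the source, I would use the standard description of the mod-$n$ cohomology of the abelian pro-$n$ group $\gfrak_1$: writing $V = \H^1(\gfrak_1,\Lambda)$, which Kummer theory identifies with $K^\times/n$, the group $\H^2(\gfrak_1,\Lambda)$ splits into a cup-product (exterior) part $\wedge^2 V$ and a Bockstein part isomorphic to $V$ via $a \mapsto \beta(a)$. For the target, the Merkurjev--Suslin theorem identifies $\H^2(G,\Lambda) \cong \H^2(G,\mu_n^{\otimes 2})$ (via $\omega$) with $K^{\mathrm M}_2(K)/n$, under which the cup product becomes the symbol $a \wedge b \mapsto \{a,b\}$ and---this is a key computation---the Bockstein becomes cup product with the root of unity, $\beta(a) \mapsto \{a,\omega\}$. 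Combining these with the presentation of $K^{\mathrm M}_2(K)/n$ by symbols subject only to bilinearity and the Steinberg relation $\{x,1-x\}=0$, I would conclude that $\ker(\inf)$ is topologically generated by two families of classes indexed by $x \in K\smallsetminus\{0,1\}$: the Steinberg classes $x \wedge (1-x)$, and the classes $x \wedge \omega - \beta(x)$, which lie in the kernel precisely because $\{x,\omega\}-\{x,\omega\}=0$.

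With $\Hom(\gfrak_2(K),\Lambda)$ thus generated, I would define $\Omega_K$ as the dual map: for $\alpha \in \gfrak_2(K)$, its image is the $\Lambda^2$-valued function $x \mapsto \bigl(\langle \alpha,\, x\wedge(1-x)\rangle,\ \langle \alpha,\, x\wedge\omega - \beta(x)\rangle\bigr)$. Injectivity of $\Omega_K$ is then equivalent, by Pontryagin duality, to the statement that these two families generate $\Hom(\gfrak_2(K),\Lambda)$---which is exactly the output of Merkurjev--Suslin above---while the image equals $\Ffrak_K^\omega$ because $\gfrak_2(K)$ is topologically generated by the commutators $[\sigma,\tau]$ and the $n$th powers $\sigma^\pi$, and these map, respectively, onto the generators $\Phi^\omega(\sigma^\omega,\tau^\omega)$ and $\Psi^\omega(\sigma^\omega)$ (as $\sigma,\tau$ vary, $\sigma^\omega,\tau^\omega$ range over all of $\Hom(K^\times,\Lambda)$ by surjectivity of the Kummer isomorphism). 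Verifying formulas (1) and (2) then amounts to evaluating the two pairings: the commutator map $\gfrak_1\times\gfrak_1\to\gfrak_2$ is dual to the cup product, so it detects only the exterior components and produces the determinants $f(x)g(1-x)-f(1-x)g(x)$ and $f(x)g(\omega)-f(\omega)g(x)$; the power map $(\bullet)^\pi$ is the quadratic refinement associated to the central extension, so on a pure cup class $a\wedge b$ it returns $\binom{n}{2}f(a)f(b)$ and on a Bockstein class $\beta(a)$ it returns the linear term $f(a)$, which together reproduce $\Psi^\omega$.

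Finally, functoriality in $K$ is automatic, since every ingredient---the five-term sequence and its transgression, the cup product and Bockstein, the Merkurjev--Suslin isomorphism, and Pontryagin duality---is natural with respect to the maps induced by $K \subset L$; one need only observe that the chosen generators of $\Hom(\gfrak_2,\Lambda)$ correspond under $\res$, which is immediate from their description in terms of symbols. The main obstacle I expect is the explicit cocycle computation underlying formulas (1) and (2): identifying $(\bullet)^\pi$ with the correct quadratic refinement and pinning down the $\binom{n}{2}$ coefficients, together with the identity $\beta(a)=\{a,\omega\}$, requires careful bookkeeping with $2$-cocycles, and is especially delicate when $n$ is even, precisely where $(\bullet)^\pi$ fails to be $\Lambda$-linear. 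This is the content that the cocycle-calculation section is set up to handle.
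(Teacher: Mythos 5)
Your proposal is correct and follows essentially the same route as the paper: you use the five-term exact sequence to identify $\Hom(\gfrak_2(K),\Lambda)$ with $\ker(\inf\colon \H^2(\gfrak_1(K))\rightarrow \H^2(\Gal(K)))$ via transgression, then Merkurjev--Suslin plus the identity expressing the Bockstein as cup product with $\omega$ to produce the two generating families indexed by $K\smallsetminus\{0,1\}$, define $\Omega_K$ as the dual (evaluation) map, and rest the explicit commutator/power formulas on the same cocycle calculations the paper defers to its appendix. The only differences are organizational --- you bypass the paper's intermediate formalism $\Sfr$, $\Sfr_R$, $\Omega_R$ of \S2 and work directly with Pontryagin duality --- and minor sign/splitting conventions (e.g.\ your class $x_\omega\cup\omega_\omega-\beta x_\omega$ versus the paper's $x_\omega\cup\omega_\omega+\beta x_\omega$, and the claimed canonical splitting $\wedge^2\Kfr\oplus\Kfr$ of $\H^2(\gfrak_1)$, which fails for even $n$ since $x\cup x={n\choose 2}\beta x$), all of which are absorbed by the careful bookkeeping you correctly flag as the technical core.
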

Theorem \ref{thm:main-thm-ffrak} shows that $\Ffrak_K^\omega$ yields a \emph{functorial formal description} of $\gfrak_2(K)$ in terms of \emph{functions} on $K\smallsetminus\{0,1\}$ with values in $\Lambda^2$.
Furthermore, this formal description is compatible with the Kummer isomorphism $(\bullet)^\omega : \gfrak_1(K) \rightarrow \Hom(K^\times,\Lambda)$, by identifying $[\sigma,\tau]$ resp. $\sigma^\pi$ with $\Phi^\omega(\sigma^\omega,\tau^\omega)$ resp. $\Psi^\omega(\sigma^\omega)$, for $\sigma,\tau \in \gfrak_1(K)$.
Theorem \ref{thm:main-thm-ffrak} is proved in sections \ref{sec:proof-theor-refthm-omegaK} and \ref{sec:proof-theor-functoriality}.

We also prove the following important corollary to Theorem \ref{thm:main-thm-ffrak} which gives an explicit and fairly restrictive condition on the types of relations that can occur between elements of the form $[\sigma,\tau]$ and $\sigma^\pi$ in the group $\gfrak_2(K)$.
\begin{corollary}[see Theorem \ref{thm:application-to-relations}]
\label{cor:cor-from-intro}
Let $K$ be a field whose characteristic is prime to $n$ with $\mu_{2n} \subset K$.
Choose $\omega \in \mu_n$ a fixed primitive $n$th root of unity.
Let $\sigma_i,\tau_i \in \gfrak_1(K)$ be a collection of elements which converges to $0$ in $\gfrak_1(K)$.
Let $a_i,b_i \in \Lambda$ be such that $2 \cdot a_i = \sigma_i^\omega(\omega)$ and $2\cdot b_i = \tau_i^\omega(\omega)$; since $\omega \in K^{\times 2}$ by assumption, such $a_i,b_i$ always exist.
Then the following are equivalent:
\begin{enumerate}
\item $\sum_i [\sigma_i,\tau_i] = \sum_i (b_i \cdot (2 \cdot \sigma_i^\pi) -a_i \cdot (2 \cdot \tau_i^\pi))$.
\item $\sum_i [\sigma_i,\tau_i] \in \langle 2 \cdot \sigma_i^\pi,2\cdot\tau_i^\pi\rangle_i$.
\item $\sum_i [\sigma_i,\tau_i] \in 2 \cdot \gfrak_1(K)^\pi$.
\end{enumerate}
\end{corollary}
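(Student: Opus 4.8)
The plan is to transport the entire statement across the isomorphism $\Omega_K$ of Theorem \ref{thm:main-thm-ffrak} and reduce it to an elementary coordinate-wise identity among the generating functions $\Phi^\omega$ and $\Psi^\omega$. Write $f_i := \sigma_i^\omega$ and $g_i := \tau_i^\omega$ in $\Hom(K^\times,\Lambda)$; since $(\bullet)^\omega$ is a topological isomorphism and $\sigma_i,\tau_i \to 0$, we have $f_i,g_i \to 0$, so that for each fixed $x$ only finitely many of the values $f_i(x),f_i(1-x),f_i(\omega),g_i(x),g_i(1-x),g_i(\omega)$ are nonzero; hence all the sums below converge in $\Ffrak_K^\omega$ and may be evaluated pointwise. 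By parts (1) and (2) of Theorem \ref{thm:main-thm-ffrak}, $\Omega_K$ carries $\sum_i[\sigma_i,\tau_i]$ to $F := \sum_i \Phi^\omega(f_i,g_i)$, and carries $2\cdot\gfrak_1(K)^\pi$ --- the closed image of the continuous homomorphism $\sigma \mapsto 2\cdot\sigma^\pi$ --- onto $\{2\Psi^\omega(h) : h \in \Hom(K^\times,\Lambda)\}$. The implications $(1)\Rightarrow(2)\Rightarrow(3)$ are then formal: $(1)\Rightarrow(2)$ is immediate from the shape of the right-hand side, and $(2)\Rightarrow(3)$ holds because each $2\cdot\sigma_i^\pi$ and $2\cdot\tau_i^\pi$ already lies in the closed subgroup $2\cdot\gfrak_1(K)^\pi$. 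The substance is $(3)\Rightarrow(1)$.

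The argument for $(3)\Rightarrow(1)$ rests on two computations. First, since $2\binom{n}{2}=n(n-1)\equiv 0 \pmod n$, for every $h\in\Hom(K^\times,\Lambda)$ one has $2\Psi^\omega(h)(x)=(0,\,2h(x))$; in particular every element of $\Omega_K(2\cdot\gfrak_1(K)^\pi)$ has vanishing first coordinate. Second, the definition of $\Phi^\omega$ shows that the second coordinate of $\Phi^\omega(f_i,g_i)(x)$ equals $f_i(x)g_i(\omega)-f_i(\omega)g_i(x)$; substituting $f_i(\omega)=\sigma_i^\omega(\omega)=2a_i$ and $g_i(\omega)=\tau_i^\omega(\omega)=2b_i$ turns this into $2b_i\,f_i(x)-2a_i\,g_i(x)$, which is exactly the second coordinate of $\big(b_i\cdot 2\Psi^\omega(f_i)-a_i\cdot 2\Psi^\omega(g_i)\big)(x)$. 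Thus the second coordinates of $F$ and of $\Omega_K\big(\sum_i(b_i\cdot 2\sigma_i^\pi-a_i\cdot 2\tau_i^\pi)\big)$ agree identically, with no hypothesis needed, and all three conditions funnel into the single question of whether the first coordinate $x\mapsto\sum_i\big(f_i(x)g_i(1-x)-f_i(1-x)g_i(x)\big)$ of $F$ vanishes.

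To conclude, assume (3). Applying $\Omega_K$ and the first computation, $F=2\Psi^\omega(h)$ for some $h$, forcing the first coordinate of $F$ to vanish identically; combined with the already-matching second coordinates, this gives $F=\Omega_K\big(\sum_i(b_i\cdot 2\sigma_i^\pi-a_i\cdot 2\tau_i^\pi)\big)$ as functions, and injectivity of $\Omega_K$ yields (1). The only genuine obstacle is bookkeeping: I must verify that the displayed sums converge in $\Ffrak_K^\omega$ (guaranteed by $\sigma_i,\tau_i\to 0$, so the pointwise manipulations are legitimate) and correctly identify $2\cdot\gfrak_1(K)^\pi$ with $\Omega_K^{-1}\{2\Psi^\omega(h)\}$, using that $\sigma\mapsto 2\sigma^\pi$ is a continuous homomorphism with closed image and that $(\bullet)^\omega$ is surjective. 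Granting these, the entire proof collapses to the two coordinate identities above, the decisive one being the congruence $2\binom{n}{2}\equiv 0 \pmod n$, which makes the first coordinate of every $2\Psi^\omega(h)$ vanish.
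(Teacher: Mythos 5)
Your proof is correct and follows essentially the same route as the paper: the paper also transports everything through $\Omega_K$ (Theorem \ref{thm:main-thm-ffrak}), uses $2\binom{n}{2}\equiv 0 \pmod n$ so that elements of $2\cdot\gfrak_1(K)^\pi$ have vanishing first coordinate, and matches second coordinates via $\sigma_i^\omega(\omega)=2a_i$, $\tau_i^\omega(\omega)=2b_i$, funneling all conditions through the vanishing of the first-coordinate sum (which is condition (1) of Theorem \ref{thm:application-to-relations}). The only cosmetic difference is that the paper inserts that vanishing condition as an explicit member of the equivalence chain, whereas you treat it as an intermediate step.
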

In particular, the equivalence of (2) and (3) in Corollary \ref{cor:cor-from-intro} shows that a pair of elements $\sigma,\tau \in \gfrak_1(K)$ form a \emph{commuting-liftable} pair (i.e. $[\sigma,\tau] \in \langle 2 \cdot \sigma^\pi,2\cdot\tau^\pi\rangle$) if and only if $\sigma,\tau$ form a \emph{weakly-commuting-liftable} pair (i.e. $[\sigma,\tau] \in 2 \cdot \gfrak_2(K)^\pi$).
This observation thereby generalizes the various main results of \cite{Topaz2012c}; see \S\ref{sec:aappl-theor-comm} for a more detailed discussion.

\subsection*{Acknowledgments} The author warmly thanks Florian Pop and J\'an Min\'a\v{c} for several discussions which motivated this work.
The author also thanks Ido Efrat for his kind encouragement and for helpful suggestions regarding the exposition.

\section{Cohomological Minimal Presentations}
\label{sec:cohom-frat-covers}

Most of the cohomological results in this section are fairly well-known for pro-$p$ groups (and $n$ a power of $p$), due to the existence of \emph{minimal free pro-$p$ presentations} and standard Frattini-type arguments which stem from the Burnside basis theorem.
Such cohomological results, concerning pro-$p$ groups of finite rank, go back to {\sc Labute} \cite{Labute1967}; see also the exposition in \cite{Neukirch2008} around Propositions 3.9.13 and 3.9.14.
For pro-$p$ groups, these cohomological results were recently further studied by {\sc Efrat-Min\'a\v{c}} in \cite{Efrat2011b} Section 2, and in \cite{Efrat2011c} Section 10.
It is also important to note that in these approaches, in order to carry out the required cocycle calculations, one must choose a basis for $\H^1$, which yields a minimal generating set for the corresponding pro-$p$ group and fixes generators for the minimal free pro-$p$ presentation.

When generalizing to arbitrary profinite groups, however, one no longer has access to these minimal free presentations.
Also, the process of choosing a free presentation, and/or choosing a basis for $\H^1$, makes statements about functoriality difficult, or even impossible to prove in the usual category of profinite/pro-$p$ groups.

In this section we discuss the main new ingredient introduced this note.
We introduce a \emph{cohomological} analogue of a minimal free presentation for $\gfrak_2(\Gfrak)$, associated to an \emph{arbitrary} profinite group $\Gfrak$, which we denote by $\Sfr(\Gfrak)$.
One of the main benefits of our construction is that $\Sfr(\Gfrak)$ is purely functorial in $\Gfrak$.

Throughout this section we will work with a fixed profinite group $\Gfrak$.
Thus, we omit $\Gfrak$ from the notation and denote $\gfrak_i(\Gfrak)$ by $\gfrak_i$.
We will denote by $\H^*(\bullet) := \H^*(\bullet,\Lambda)$ the continuous-cochain cohomology of $\bullet$ with values in $\Lambda$.
Throughout, we will also assume that $\gfrak_1 := \Gfrak/\Gfrak^{(2)}$ is isomorphic to $(\Z/n)^I$ for some indexing set $I$. 
Note that if $K$ is a field whose characteristic is prime to $n$ with $\mu_n \subset K$, then $\gfrak_1(K) \cong (\Z/n)^I$, for some indexing set $I$, by Kummer theory; thus $\Gal(K)$ satisfies our added assumption.

Recall that Pontryagin duality yields a perfect pairing between $\H^1(\gfrak_1) = \H^1(\Gfrak)$ and $\gfrak_1$.
We define $\Kfr(\Gfrak) := \H^1(\gfrak_1)$.
As with $\gfrak_i$, we will denote $\Kfr(\Gfrak)$ by $\Kfr$ throughout this section.
Throughout the note, we will identify $\gfrak_1$ with $\Hom(\Kfr,\Lambda)$ via the canonical perfect pairing $\gfrak_1 \times \Kfr \rightarrow \Lambda$.

We recall that the Lyndon-Hochschild-Serre (LHS) spectral sequence associated to the extension $\Gfrak \rightarrow \Gfrak/\Gfrak^{(2)} = \gfrak_1$, 
\[ \H^i(\gfrak_1,\H^j(\Gfrak^{(2)})) \Rightarrow \H^{i+j}(\Gfrak), \]
combined with the fact that $\inf : \H^1(\gfrak_1) \rightarrow \H^1(\Gfrak)$ is an isomorphism, yields the following exact sequence:
\[ 0 \rightarrow \H^1(\Gfrak^{(2)})^{\gfrak_1} \xrightarrow{d_2} \H^2(\gfrak_1) \xrightarrow{\inf} \H^2(\Gfrak). \]
Above, $d_2$ denotes the differential on the ${\rm E}_2$-page in the LHS-spectral sequence:
\[ d_2 := d_2^{0,1} : {\rm E}_2^{0,1} \rightarrow {\rm E}_2^{2,0}. \]

Furthermore, we recall that the {\bf Bockstein} homomorphism $\beta : \H^1(\gfrak_1) \rightarrow \H^2(\gfrak_2)$ is the connecting homomorphism in the cohomological long exact sequence associated to the short exact sequence of coefficient modules (recall that $\Z/n = \Lambda$):
\[ 1 \rightarrow \Z/n \xrightarrow{n} \Z/n^2 \rightarrow \Z/n \rightarrow 1. \]

We denote by $\gfrak_1 \hotimes \gfrak_1$ the \emph{completed} tensor product of $\gfrak_1$ with itself.
In other words, 
\[ \gfrak_1 \hotimes \gfrak_1 = \varprojlim \gfrak_1/N_1 \otimes \gfrak_1/N_2 \]
where $N_1$ and $N_2$ vary over the finite index subgroups of $\gfrak_1$ and the tensor product is taken over $\Lambda$ by considering $\gfrak_1/N_i$ as $\Lambda$-modules in the obvious way.
One has a canonical map $\gfrak_1 \otimes \gfrak_1 \rightarrow \gfrak_1 \hotimes \gfrak_1$; if $\sigma,\tau \in \gfrak_1$, we will abuse the notation and write $\sigma \otimes \tau \in \gfrak_1 \hotimes \gfrak_1$ for the image of $\sigma \otimes \tau \in \gfrak_1 \otimes \gfrak_1$ under this map.
It is easy to see that $\gfrak_1 \hotimes \gfrak_1$ is Pontryagin dual to $\Kfr \otimes \Kfr$ via the pairing $(\gfrak_1 \hotimes \gfrak_1) \times (\Kfr \otimes \Kfr) \rightarrow \Lambda$ given by 
\[ (\sigma \otimes \tau,x \otimes y) \mapsto \sigma(x)\tau(y) \]
and extended linearly.
We will therefore sometimes denote elements $f$ of $\gfrak_1 \hotimes \gfrak_1$ as $\Lambda$-bilinear forms $\Kfr \times \Kfr \rightarrow \Lambda$, just as we denote elements $g$ of $\gfrak_1$ as homomorphisms $\Kfr \rightarrow \Lambda$.

Denote by $\Sfr(\Gfrak)$ the subset of $(\gfrak_1 \hotimes \gfrak_1) \times \gfrak_1$ defined by 
\[ \Sfr(\Gfrak) = \left\{\left(f,g\right) \ : \ \forall x \in \Kfr, \ f(x,x) = {n \choose 2} \cdot g(x) \right\}. \]
As with $\gfrak_i$ and $\Kfr$, we will omit $\Gfrak$ from the notation and denote $\Sfr(\Gfrak)$ by $\Sfr$ throughout this section.
Clearly $\Sfr$ is a closed subgroup of $(\gfrak_1 \hotimes \gfrak_1) \times \gfrak_1$.
We also observe that the image of the projection $\Sfr \rightarrow \gfrak_1 \hotimes \gfrak_1$ is the subgroup of alternating $\Lambda$-bilinear forms on $\Kfr \times \Kfr$ with values in $\Lambda$.

\begin{lemma}
\label{lemma-perf-pairing-H2}
In the notation above, one has a surjective map $(\Kfr \otimes \Kfr) \oplus \Kfr \rightarrow \H^2(\gfrak_1)$ defined by $(x \otimes y) \oplus z \mapsto x \cup y +\beta z$ (extended linearly).
Moreover, the kernel of this surjective map is generated by elements of the form $(x \otimes x) \oplus (-{n \choose 2} x)$ as $x \in \Kfr$ varies.
In particular, we obtain a canonical perfect pairing $(\bullet,\bullet)_{\Sfr} : \Sfr \times \H^2(\gfrak_1) \rightarrow \Lambda$, defined by 
\begin{enumerate}
\item $((f,g),x \cup y)_\Sfr = f(x,y)$
\item $((f,g),\beta z)_\Sfr =  g(z)$
\end{enumerate}
and extended linearly.
\end{lemma}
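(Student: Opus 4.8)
The plan is to first give an explicit presentation of $\H^2(\gfrak_1)$ by generators and relations, and then to obtain the asserted perfect pairing as a purely formal consequence of Pontryagin duality. I identify $\Kfr = \H^1(\gfrak_1)$ with degree-one cohomology, so that cup product is a $\Lambda$-bilinear map $\Kfr \times \Kfr \to \H^2(\gfrak_1)$ and the Bockstein $\beta$ is a homomorphism $\Kfr \to \H^2(\gfrak_1)$; hence $(x \otimes y) \oplus z \mapsto x \cup y + \beta z$ is a well-defined $\Lambda$-linear map $\phi : (\Kfr \otimes \Kfr) \oplus \Kfr \to \H^2(\gfrak_1)$.

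The key input is the universal relation $x \cup x = {n \choose 2}\beta x$, valid for every $x \in \H^1(G,\Lambda)$ and every profinite group $G$. First I would prove this by naturality: a class $x \in \H^1(G,\Lambda)$ is the pullback $x = \rho^\ast\iota$ of the canonical class $\iota \in \H^1(\Z/n,\Lambda)$ along the homomorphism $\rho : G \to \Z/n$ that $x$ represents, and both $\cup$ and $\beta$ are natural, so the identity reduces to the classical computation of the ring $\H^\ast(\Z/n,\Lambda)$, in which $\iota \cup \iota = {n \choose 2}\beta\iota$. Polarizing this relation, together with bilinearity of $\cup$ and additivity of $\beta$, immediately yields the antisymmetry $x \cup y + y \cup x = 0$ for all $x,y$. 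These two facts show that every generator $(x \otimes x) \oplus (-{n \choose 2}x)$ lies in $\ker\phi$, and that the subgroup $R$ they generate also contains every element $(x \otimes y + y \otimes x) \oplus 0$.

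To finish the presentation I would reduce to finite rank. Since $\gfrak_1 \cong (\Z/n)^I$ is the inverse limit of its finite quotients $(\Z/n)^J$ over finite $J \subseteq I$, continuous cohomology gives $\H^2(\gfrak_1) = \varinjlim_J \H^2((\Z/n)^J)$, and both $\phi$ and the subgroups $R$ are compatible with these colimits; so it suffices to treat a finite set $J$. There I would fix the basis $\{x_i\}$ of $\Kfr$ dual to the coordinates and invoke the Künneth formula (there are no Tor terms, as $\H^\ast(\Z/n,\Lambda)$ is $\Lambda$-free in each degree) to exhibit $\{x_i \cup x_j : i<j\} \cup \{\beta x_i\}$ as a $\Lambda$-basis of $\H^2((\Z/n)^J)$. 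Using the two relations above, the quotient $((\Kfr \otimes \Kfr) \oplus \Kfr)/R$ is generated by the classes of $(x_i \otimes x_j)\oplus 0$ for $i<j$ and of $0 \oplus x_i$, which $\phi$ carries onto this basis; the $\Lambda$-linear map sending the basis back to these generators is then a right inverse of the induced surjection, and it is itself surjective since its image contains a generating set, so it is an isomorphism and $\ker\phi = R$. I expect this kernel computation --- ruling out relations beyond those in $R$ --- to be the main obstacle, since it is precisely where one must pin down the size of $\H^2(\gfrak_1)$ rather than merely a generating set.

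The perfect pairing is then formal. Dualizing the presentation $\H^2(\gfrak_1) = ((\Kfr\otimes\Kfr)\oplus\Kfr)/R$, and using that $\gfrak_1 \hotimes \gfrak_1$ and $\gfrak_1$ are the Pontryagin duals of $\Kfr \otimes \Kfr$ and $\Kfr$, identifies the dual of $\H^2(\gfrak_1)$ with the annihilator $R^\perp$ inside $(\gfrak_1 \hotimes \gfrak_1)\times\gfrak_1$. Reading off the annihilator condition, a pair $(f,g)$ kills every generator $(x\otimes x)\oplus(-{n\choose 2}x)$ precisely when $f(x,x) = {n\choose 2}g(x)$ for all $x \in \Kfr$, which is exactly the defining condition of $\Sfr$; hence $\Sfr = R^\perp$ is canonically Pontryagin dual to $\H^2(\gfrak_1)$, giving the perfect pairing. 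Finally, unwinding the duality yields $((f,g),\phi(u))_{\Sfr} = \langle (f,g),u\rangle$ for $u \in (\Kfr\otimes\Kfr)\oplus\Kfr$; taking $u = x\otimes y$ gives $f(x,y)$ and $u = z$ gives $g(z)$, which are formulas (1) and (2) since $\phi(x\otimes y) = x\cup y$ and $\phi(z) = \beta z$.
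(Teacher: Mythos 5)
Your proof is correct and follows essentially the same route as the paper's: both reduce the lemma to the presentation of $\H^2(\gfrak_1)$ as a quotient of $(\Kfr \otimes \Kfr) \oplus \Kfr$ and then obtain the perfect pairing formally by Pontryagin duality (identifying $\Sfr$ with the annihilator of the relation subgroup), with the presentation itself established via the cyclic case, the K\"unneth formula for finite rank, and a limit argument for arbitrary $I$. The only differences are ones of detail: where you prove the relation $x \cup x = {n \choose 2}\beta x$ by naturality from the classical ring $\H^*(\Z/n,\Lambda)$ and spell out the kernel identification with an explicit right-inverse argument, the paper instead cites its appendix cocycle computation (Theorem \ref{thm:cocycle-calculations} applied to $\Gfrak = \hat{\Z}$) for the former and leaves the latter as ``essentially well-known'' from K\"unneth.
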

\begin{proof}
Observe that one has a perfect pairing
\[ ((\gfrak_1 \hotimes \gfrak_1) \times \gfrak_1) \times ((\Kfr \otimes \Kfr) \oplus \Kfr) \rightarrow \Lambda \]
defined by $((f,g),(x \otimes y) \oplus z) \mapsto f(x,y)+g(z)$.
Thus, it suffices to prove that the map $(\Kfr \otimes \Kfr) \oplus \Kfr \rightarrow \H^2(\gfrak_1)$, defined by $(x \otimes y)\oplus z \mapsto x\cup y+\beta z$, is surjective with kernel generated by elements of the form $(x \otimes x) \oplus (-{n \choose 2} \cdot x)$.
The statement concerning the induced perfect pairing $(\bullet,\bullet)_\Sfr$ would follow immediately from this and the definition of $\Sfr$.

Recall our overarching assumption that $\gfrak_1 \cong (\Z/n)^I$ for some indexing set $I$.
Now the required presentation of $\H^2((\Z/n)^I) \cong \H^2(\gfrak_1)$, as a quotient of $(\Kfr \otimes \Kfr) \oplus \Kfr$, is essentially well-known as it follows from the K\"{u}nneth Formula.
We give a brief argument below.

In the case where $\gfrak_1 \cong \Z/n$ is cyclic, it is well-known that: 
\begin{enumerate}
\item $\H^1(\Z/n) = \langle x \rangle \cong \Z/n$, where $x$ is the identity homomorphism $\Z/n \rightarrow \Z/n$,
\item $\H^2(\Z/n) = \langle \beta x \rangle \cong \Z/n$ and
\item $x \cup x = {n \choose 2} \beta x$ in $\H^2(\Z/n)$.
\end{enumerate}
Statements (1) and (2) above can be found in any standard book on group cohomology (see e.g. \cite{Neukirch2008} Chapter 1.7).
Statement (3) is also well-known, but we note that it follows from Theorem \ref{thm:cocycle-calculations} below if we take $\Gfrak = \hat \Z$.
Namely, $\H^2(\hat\Z) = 0$ and $\gfrak_1(\hat\Z)= \Z/n$; therefore $d_2 : \H^1(\hat\Z^{(2)})^{\Z/n} \rightarrow \H^2(\Z/n)$ is an isomorphism, etc.
Thus the lemma is proven in the case where $\# I = 1$.
The case where $I$ is finite now follows from the K\"{u}nneth formula for finite group cohomology, while the case where $I$ is arbitrary follows from the finite $I$ case using a limit argument.
\end{proof}

\begin{lemma}
\label{lemma-mod-n-frattini}
In the notation above, the surjective map $\Gfrak^{(2)} \twoheadrightarrow \gfrak_2$ yields a canonical perfect pairing:
\[ (\bullet,\bullet)_{\gfrak_2} : \gfrak_2 \times \H^1(\Gfrak^{(2)})^{\gfrak_1} \rightarrow \Lambda. \]
The dual of $-d_2 : \H^1(\Gfrak^{(2)})^{\gfrak_1} \rightarrow \H^2(\gfrak_1)$, via $(\bullet,\bullet)_{\gfrak_2}$ and the pairing $(\bullet,\bullet)_{\Sfr}$ of Lemma \ref{lemma-perf-pairing-H2}, yields a canonical surjective map $-d_2^\vee : \Sfr \rightarrow \gfrak_2$ which is defined by the equation $(-d_2^\vee(s),\phi)_{\gfrak_2} = (s,-d_2\phi)_{\Sfr}$.
\end{lemma}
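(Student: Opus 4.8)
The plan is to deduce both assertions formally from Pontryagin duality, the defining property of the mod-$n$ central-descending series, and the perfect pairing $(\bullet,\bullet)_\Sfr$ of Lemma \ref{lemma-perf-pairing-H2}; essentially no new computation is needed beyond careful bookkeeping.

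First I would establish the pairing $(\bullet,\bullet)_{\gfrak_2}$ of Part 1. Let $M := \Gfrak^{(2)}/\bigl((\Gfrak^{(2)})^n \cdot [\Gfrak^{(2)},\Gfrak^{(2)}]\bigr)$ denote the maximal exponent-$n$ abelian quotient of $\Gfrak^{(2)}$, a profinite $\Lambda$-module carrying the conjugation action of $\Gfrak$. Since inner automorphisms act trivially on cohomology, this action factors through $\gfrak_1$, and by the very definition of $\Gfrak^{(3)} = [\Gfrak,\Gfrak^{(2)}]\cdot(\Gfrak^{(2)})^n$ one has $\gfrak_2 = \Gfrak^{(2)}/\Gfrak^{(3)} = M_{\gfrak_1}$, the $\gfrak_1$-coinvariants of $M$. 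On the other hand, since $\Lambda$ carries the trivial action, $\H^1(\Gfrak^{(2)}) = \Hom(\Gfrak^{(2)},\Lambda) = M^\vee$ as $\gfrak_1$-modules, so that $\H^1(\Gfrak^{(2)})^{\gfrak_1} = (M^\vee)^{\gfrak_1} = \Hom_{\Gfrak}(\Gfrak^{(2)},\Lambda)$ consists precisely of the $\Gfrak$-equivariant homomorphisms. The pairing $(\bullet,\bullet)_{\gfrak_2}$ is then nothing but evaluation, $(a\bmod\Gfrak^{(3)},\phi)\mapsto\phi(a)$: it is well-defined because every such equivariant $\phi$ of exponent $n$ kills $\Gfrak^{(3)}$; its left kernel is $\Gfrak^{(3)}$ by the definition of the series; and it is right non-degenerate for trivial reasons. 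That it is moreover a \emph{perfect} pairing of a profinite group against a discrete one is exactly the standard identity $(M_{\gfrak_1})^\vee = (M^\vee)^{\gfrak_1}$ relating the coinvariants of a profinite module to the invariants of its Pontryagin dual.

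For Part 2 I would simply dualize $-d_2$. The exact sequence $0 \rightarrow \H^1(\Gfrak^{(2)})^{\gfrak_1} \xrightarrow{d_2} \H^2(\gfrak_1)$ recorded above exhibits $-d_2$ as an injection of discrete $\Lambda$-modules. Applying the exact functor $\Hom(\bullet,\Lambda)$ — which, on $n$-torsion modules, is ordinary Pontryagin duality — turns this injection into a surjection $\Hom(\H^2(\gfrak_1),\Lambda) \twoheadrightarrow \Hom(\H^1(\Gfrak^{(2)})^{\gfrak_1},\Lambda)$. The perfect pairing $(\bullet,\bullet)_\Sfr$ identifies the source with $\Sfr$ and the pairing $(\bullet,\bullet)_{\gfrak_2}$ from Part 1 identifies the target with $\gfrak_2$; under these identifications the dualized map is exactly the $-d_2^\vee$ prescribed by the adjunction $(-d_2^\vee(s),\phi)_{\gfrak_2} = (s,-d_2\phi)_\Sfr$. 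This formula determines $-d_2^\vee(s)$ uniquely by non-degeneracy of $(\bullet,\bullet)_{\gfrak_2}$, yielding a well-defined continuous homomorphism, and its surjectivity is precisely the surjectivity of the dualized map, i.e. the injectivity of $-d_2$.

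Since the argument is formal once the pairings are in place, the only genuine point requiring care is the input to Part 1: matching the group-theoretic definition of $\Gfrak^{(3)}$ with the cohomological invariants $\H^1(\Gfrak^{(2)})^{\gfrak_1}$, and keeping careful track of which objects are profinite and which are discrete, so that the coinvariant/invariant duality and the exactness of $\Hom(\bullet,\Lambda)$ may be invoked verbatim. Once this bookkeeping is settled, both the perfect pairing $(\bullet,\bullet)_{\gfrak_2}$ and the surjection $-d_2^\vee$ drop out immediately.
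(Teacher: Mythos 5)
Your proposal is correct and follows essentially the same route as the paper: Part 1 is the observation that $\H^1(\Gfrak^{(2)})^{\gfrak_1} = \Hom_\Gfrak(\Gfrak^{(2)},\Lambda)$ together with the definition of $\Gfrak^{(3)}$ as the left kernel of the canonical pairing, made perfect via Pontryagin duality (your coinvariants/invariants identity $(M_{\gfrak_1})^\vee = (M^\vee)^{\gfrak_1}$ is just a more explicit packaging of this), and Part 2 is obtained by dualizing the injection $-d_2$ exactly as in the paper. The only difference is that you spell out the bookkeeping the paper leaves implicit.
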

\begin{proof}
Recall that $\H^1(\Gfrak^{(2)})^{\gfrak_1} = \Hom_{\Gfrak}(\Gfrak^{(2)},\Lambda)$.
Thus the first statement follows, using Pontryagin duality, from the definition of $\gfrak_2 = \Gfrak^{(2)}/\Gfrak^{(3)}$.
Indeed, recall that $\Gfrak^{(3)}$ is the left-kernel of the canonical pairing $\Gfrak^{(2)} \times \Hom_\Gfrak(\Gfrak^{(2)},\Lambda) \rightarrow \Lambda$.
The surjectivity of $-d_2^\vee : \Sfr \rightarrow \gfrak_2$ is immediate, by Pontryagin duality, since $-d_2 : \H^1(\Gfrak^{(2)})^{\gfrak_1} \rightarrow \H^2(\gfrak_1)$ is injective.
\end{proof}

The next theorem, and the corollary which follows it, are the essential steps in describing the surjective map $-d_2^\vee : \Sfr \rightarrow \gfrak_2$ of Lemma \ref{lemma-mod-n-frattini} in a more explicit way.
Before we state the theorem, we recall our convention that $\gfrak_1$ is identified with $\Hom(\Kfr,\Lambda)$; in particular, we write elements of $\gfrak_1$ as \emph{functions} on $\Kfr$ with values in $\Lambda$.

\begin{theorem}
\label{thm:cocycle-calculations}
In the notation above, let $u \in \H^1(\Gfrak^{(2)})^{\gfrak_1}$ be given and consider $\eta := d_2 u$.
Choose $x_i,y_i,z_j \in \Kfr$ with $\eta = \sum_i x_i \cup y_i + \sum_j \beta z_j$, (this is always possible by Lemma \ref{lemma-perf-pairing-H2}).
Then, for all $\sigma,\tau \in \gfrak_1$, the following hold:
\begin{enumerate}
\item $-([\sigma,\tau],u)_{\gfrak_2} =  \sum_i \sigma(x_i)\tau(y_i)-\sigma(y_i)\tau(x_i)$ and
\item $-(\sigma^\pi,u)_{\gfrak_2} = {n \choose 2}\cdot\sum_i \sigma(x_i)\sigma(y_i) + \sum_j \sigma(z_j)$.
\end{enumerate}
\end{theorem}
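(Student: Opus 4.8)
The plan is to reduce both identities to a single cocycle-level computation of the transgression $d_2$, and then to evaluate the resulting $2$-cocycle on commutators and $n$th powers using the standard cocycle representatives for cup products and Bocksteins. Throughout I would work inside the central extension $1 \to \gfrak_2 \to \Gfrak/\Gfrak^{(3)} \to \gfrak_1 \to 1$: this is legitimate because $u \in \H^1(\Gfrak^{(2)})^{\gfrak_1} = \Hom_\Gfrak(\Gfrak^{(2)},\Lambda)$ kills $\Gfrak^{(3)}$ and so descends to a homomorphism $\bar u : \gfrak_2 \to \Lambda$ with $(\gamma,u)_{\gfrak_2} = \bar u(\gamma)$, while $[\sigma,\tau]$ and $\sigma^\pi$ are by definition computed from lifts in $\Gfrak/\Gfrak^{(3)}$, where $\gfrak_2$ is central.

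First I would fix a normalized set-theoretic section $s : \gfrak_1 \to \Gfrak$ (reducing to finite quotients to keep the cochains continuous) and form the factor set $c(\sigma,\tau) := s(\sigma)s(\tau)s(\sigma+\tau)^{-1} \in \Gfrak^{(2)}$. Setting $\tilde u(g) := u(s(\bar g)^{-1}g)$ gives a $1$-cochain on $\Gfrak$ lifting $u$, and a short manipulation using the $\gfrak_1$-invariance of $u$ shows $(d\tilde u)(\sigma,\tau) = -u(c(\sigma,\tau))$ with $d\tilde u$ inflated from $\gfrak_1$; by the standard description of the transgression this means $\eta = d_2u$ is represented by the cocycle $(\sigma,\tau)\mapsto -u(c(\sigma,\tau))$. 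Pinning this sign down is the first delicate point, and it is exactly what produces the overall minus signs on the left-hand sides of (1) and (2). Since $\gfrak_2$ is central, the usual factor-set identities then give $[\sigma,\tau] = c(\sigma,\tau)-c(\tau,\sigma)$ and $\sigma^\pi = \tsigma^n = \sum_{k=1}^{n-1} c(\sigma,k\sigma)$ in $\gfrak_2$, the latter by an easy induction together with $n\sigma = 0$. Applying $\bar u$ yields
\[
-([\sigma,\tau],u)_{\gfrak_2} = \eta(\sigma,\tau)-\eta(\tau,\sigma), \qquad -(\sigma^\pi,u)_{\gfrak_2} = \sum_{k=1}^{n-1}\eta(\sigma,k\sigma),
\]
valid for the specific representative $\eta = -u\circ c$.

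The second ingredient is that the two right-hand expressions are insensitive to the choice of cocycle representing the class $\eta \in \H^2(\gfrak_1)$. I would record this as a short lemma: for any $1$-cochain $\phi$ on $\gfrak_1$ one has $(d\phi)(\sigma,\tau)-(d\phi)(\tau,\sigma) = 0$ and $\sum_{k=1}^{n-1}(d\phi)(\sigma,k\sigma) = n\phi(\sigma) = 0$ in $\Lambda$ (the latter telescoping). This licenses replacing $-u\circ c$ by the standard representatives of $\sum_i x_i\cup y_i + \sum_j \beta z_j$. For the cup products I would use $(x\cup y)(\sigma,\tau) = \sigma(x)\tau(y)$, so that antisymmetrization contributes $\sigma(x_i)\tau(y_i)-\sigma(y_i)\tau(x_i)$ and $\sum_{k=1}^{n-1}(x\cup y)(\sigma,k\sigma) = \sigma(x)\sigma(y)\sum_{k=1}^{n-1}k = {n\choose 2}\,\sigma(x)\sigma(y)$. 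For $\beta z$ I would use the representative coming from $1 \to \Z/n \to \Z/n^2 \to \Z/n \to 1$, the normalized cocycle measuring the carry when $z$ is lifted to integer representatives in $\{0,\dots,n-1\}$; this cocycle is symmetric, so it drops out of the commutator formula, explaining why no $z_j$ appears in (1).

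The one genuinely computational step is the Bockstein contribution to the power formula. Writing $a$ for the representative of $\sigma(z)$, the sum $\sum_{k=1}^{n-1}(\beta z)(\sigma,k\sigma)$ telescopes: the carry terms collapse to $\tilde z(\sigma)-\tilde z(n\sigma) = a$, the remaining constant terms sum to $(n-1)a$, and together they give $\tfrac1n(na) = a \equiv \sigma(z)\pmod n$. Summing over $i$ and $j$ assembles the right-hand sides of (1) and (2). I expect the main obstacle to be bookkeeping of signs and the passage between cohomology classes and chosen cocycles; the coboundary-invariance lemma is what makes that passage harmless. As a consistency check it is also compatible with the generating relation $(x\otimes x)\oplus(-{n\choose 2}x)\mapsto 0$ of Lemma \ref{lemma-perf-pairing-H2}, since its power contribution is ${n\choose 2}\,m(m-1)\equiv 0\pmod n$ for every integer $m$, and its commutator contribution vanishes by antisymmetry.
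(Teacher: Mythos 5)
Your proof is correct, but it is organized differently from the paper's, so a comparison is worthwhile. The paper goes ``upstairs'': it fixes the standard cocycle $\xi = \sum_i {\rm U}_{x_i,y_i} + \sum_j {\rm B}_{z_j}$ representing $\eta$, inflates it to $\Gfrak$, invokes the same NSW-style description of $d_2$ that you use to write $\inf\xi = du'$ with $u'|_{\Gfrak^{(2)}} = u$, and then evaluates $u'(\tsigma^n)$ and $u'(\tsigma^{-1}\ttau^{-1}\tsigma\ttau)$ by telescoping the coboundary identity $u'(\tsigma\ttau) = u'(\tsigma)+u'(\ttau)-\xi(\tsigma,\ttau)$; the combinatorial content is packaged in Proposition \ref{prop:cocycles}, whose Bockstein power identity is proved by a gcd-counting argument. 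You instead go ``downstairs'': you build the extending cochain $\tilde u$ explicitly from a section, identify $d_2u$ with the factor-set cocycle $-u\circ c$, observe that the two pairings are exactly the antisymmetrization and power-sum functionals of \emph{any} representing cocycle (your coboundary-invariance lemma), and then evaluate those functionals on the standard representatives. What your route buys: the appearance of $[\sigma,\tau]$ and $\sigma^\pi$ becomes conceptually transparent (they are literally $c(\sigma,\tau)-c(\tau,\sigma)$ and $\sum_k c(\sigma,k\sigma)$), the representative-independence is isolated as a clean reusable lemma rather than being implicit, and your telescoping proof of $\sum_k(\beta z)(\sigma,k\sigma)=\sigma(z)$ is slicker than the paper's counting argument. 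What the paper's route buys: no choice of section and no factor-set formalism, hence no need for the invariance lemma; one explicit coboundary does all the work. Two small points you should nail down in a full write-up: the section $s$ must be chosen continuous (this exists for surjections of profinite groups, so your parenthetical about finite quotients can be replaced by that citation), and you should be consistent about whether $s$ lands in $\Gfrak$ or in $\Gfrak/\Gfrak^{(3)}$ --- if in $\Gfrak$, the values $c(\sigma,\tau)$ are not central, and you must use the conjugation-invariance of $u$ (as you do implicitly) rather than centrality; if in $\Gfrak/\Gfrak^{(3)}$, you should note that pulling $\tilde u$ back along $\Gfrak \twoheadrightarrow \Gfrak/\Gfrak^{(3)}$ gives a cochain on $\Gfrak$ restricting to $u$ on $\Gfrak^{(2)}$, so the characterization of $d_2$ for the extension $1 \to \Gfrak^{(2)} \to \Gfrak \to \gfrak_1 \to 1$ (which is the $d_2$ in the statement) applies verbatim. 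Neither point is a gap in the idea.
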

\begin{proof}
The proof of this theorem involves very explicit cocycle calculations which resemble the calculations in \cite{Neukirch2008} Propositions 3.9.13 and 3.9.14.
For sake of exposition, we defer the detailed proof of this theorem to the appendix: \S\ref{sec:cocycle-calculations}.
\end{proof}

Motivated by the viewpoint that $\Sfr$ should act as a sort-of free presentation for $\gfrak_2$, we are interested in finding elements of $\Sfr$ which map to $[\sigma,\tau]$ and $\sigma^\pi$ via the surjective map $-d_2^\vee : \Sfr \rightarrow \gfrak_2$ described in Lemma \ref{lemma-mod-n-frattini}.
Therefore, we now introduce two types of elements in $\Sfr$ which will work in general:
\begin{enumerate}
\item Let $\sigma,\tau \in \gfrak_1$ be given. Then $(\sigma \otimes \tau - \tau \otimes \sigma,0) \in \Sfr$. 
\item Let $\sigma \in \gfrak_1$ be given. Then $({n \choose 2}\sigma \otimes \sigma,\sigma) \in \Sfr$.
\end{enumerate}
The next corollary to Theorem \ref{thm:cocycle-calculations} shows that these special elements of $\Sfr$ will work for our purposes by mapping to $[\sigma,\tau]$ and $\sigma^\pi$.

\begin{corollary}
\label{cor:frattini-presentation}
In the notation above, consider the surjective map $-d_2^\vee :  \Sfr \rightarrow \gfrak_2$ of Lemma \ref{lemma-mod-n-frattini}, which is defined by $(-d_2^\vee(s),\phi)_{\gfrak_2} = (s,-d_2\phi)_\Sfr$.
Then the following hold:
\begin{enumerate}
\item For $\sigma,\tau \in \gfrak_1$, $-d_2^\vee(\sigma\otimes\tau-\tau\otimes\sigma,0) = [\sigma,\tau]$.
\item For $\sigma \in \gfrak_1$, $-d_2^\vee({n \choose 2} \sigma \otimes \sigma,\sigma) = \sigma^\pi$.
\end{enumerate}
\end{corollary}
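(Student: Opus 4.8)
The plan is to deduce both identities formally from Theorem \ref{thm:cocycle-calculations} by testing against the perfect pairing $(\bullet,\bullet)_{\gfrak_2}$ of Lemma \ref{lemma-mod-n-frattini}. Since that pairing is nondegenerate, in order to prove that $-d_2^\vee(s) = \gamma$ for a given $s \in \Sfr$ and a candidate $\gamma \in \gfrak_2$, it suffices to check that $(-d_2^\vee(s),u)_{\gfrak_2} = (\gamma,u)_{\gfrak_2}$ for every $u \in \H^1(\Gfrak^{(2)})^{\gfrak_1}$. By the defining equation of $-d_2^\vee$, the left-hand side equals $(s,-d_2 u)_{\Sfr}$, so everything reduces to computing the $\Sfr$-pairing of $s$ against $-d_2 u$ and comparing with the cocycle formulas. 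First I would fix $u$ and, invoking Lemma \ref{lemma-perf-pairing-H2}, write $\eta := d_2 u = \sum_i x_i \cup y_i + \sum_j \beta z_j$ with $x_i,y_i,z_j \in \Kfr$; note that this is exactly the shape in which $\eta$ enters both the pairing $(\bullet,\bullet)_{\Sfr}$ of Lemma \ref{lemma-perf-pairing-H2} and the statement of Theorem \ref{thm:cocycle-calculations}. For $s = (f,g)$ the explicit description of $(\bullet,\bullet)_{\Sfr}$ then gives $(s,\eta)_{\Sfr} = \sum_i f(x_i,y_i) + \sum_j g(z_j)$, and hence $(s,-d_2 u)_{\Sfr} = -(s,\eta)_{\Sfr}$.

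For part (1), I would take $s = (\sigma\otimes\tau - \tau\otimes\sigma,\,0)$, which lies in $\Sfr$ as noted above. Evaluating the bilinear form via the pairing $(\sigma\otimes\tau,x\otimes y)\mapsto\sigma(x)\tau(y)$ yields $f(x_i,y_i) = \sigma(x_i)\tau(y_i) - \sigma(y_i)\tau(x_i)$, while $g = 0$, so $(s,\eta)_{\Sfr} = \sum_i\bigl(\sigma(x_i)\tau(y_i) - \sigma(y_i)\tau(x_i)\bigr)$. This is precisely the expression that Theorem \ref{thm:cocycle-calculations}(1) identifies with $-([\sigma,\tau],u)_{\gfrak_2}$, so $(-d_2^\vee(s),u)_{\gfrak_2} = -(s,\eta)_{\Sfr} = ([\sigma,\tau],u)_{\gfrak_2}$. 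For part (2), I would take $s = ({n\choose 2}\sigma\otimes\sigma,\,\sigma) \in \Sfr$; here $f(x_i,y_i) = {n\choose 2}\sigma(x_i)\sigma(y_i)$ and $g(z_j) = \sigma(z_j)$, giving $(s,\eta)_{\Sfr} = {n\choose 2}\sum_i\sigma(x_i)\sigma(y_i) + \sum_j\sigma(z_j)$, which Theorem \ref{thm:cocycle-calculations}(2) identifies with $-(\sigma^\pi,u)_{\gfrak_2}$. Thus $(-d_2^\vee(s),u)_{\gfrak_2} = (\sigma^\pi,u)_{\gfrak_2}$. In both cases the equality holds for all $u$, so nondegeneracy of $(\bullet,\bullet)_{\gfrak_2}$ forces the claimed identities $-d_2^\vee(s) = [\sigma,\tau]$ and $-d_2^\vee(s) = \sigma^\pi$ respectively.

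I do not expect a genuine obstacle here: the corollary is essentially a dualization of Theorem \ref{thm:cocycle-calculations}, and all the analytic content lives in that theorem. The only points requiring care are bookkeeping of the two sign reversals (the $-d_2$ in the definition of $-d_2^\vee$, and the leading minus signs in the two cocycle formulas), and checking at the outset that the proposed elements genuinely lie in $\Sfr$ — for the second this uses that ${n\choose 2}a(a-1) \equiv 0 \pmod n$ for every $a \in \Lambda$, since $a(a-1)$ is even, which is exactly the defining condition $f(x,x) = {n\choose 2}g(x)$. Once these are in place, the verification is purely formal, and the fact that the decomposition $\eta = \sum_i x_i\cup y_i + \sum_j\beta z_j$ is not unique causes no trouble because both pairings into $\Lambda$ are well defined on $\H^2(\gfrak_1)$.
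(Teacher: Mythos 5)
Your proof is correct and takes essentially the same route as the paper's: both reduce the two identities, via nondegeneracy of $(\bullet,\bullet)_{\gfrak_2}$ and the defining equation of $-d_2^\vee$, to Theorem \ref{thm:cocycle-calculations} combined with the explicit description of $(\bullet,\bullet)_\Sfr$ from Lemma \ref{lemma-perf-pairing-H2}. The only difference is expository: you spell out the sign bookkeeping and verify membership of the two special elements in $\Sfr$, points the paper treats as immediate.
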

\begin{proof}
Let $\sigma,\tau \in \gfrak_1$ be given.
It suffices to prove that for any $u \in \H^1(\Gfrak^{(2)})^{\gfrak_1}$, the following two equalities hold:
\begin{enumerate}
\item $([\sigma,\tau],u)_{\gfrak_2} = u([\sigma,\tau]) = ((\sigma\otimes\tau-\tau\otimes\sigma,0),-d_2(u))_\Sfr$ and 
\item $(\sigma^\pi,u)_{\gfrak_2} = u(\sigma^\pi) = (({n\choose 2}\sigma\otimes\sigma,\sigma),-d_2(u))_\Sfr$.
\end{enumerate} 
But this is precisely Theorem \ref{thm:cocycle-calculations} combined with the definition of $(\bullet,\bullet)_\Sfr$ from Lemma \ref{lemma-perf-pairing-H2}.
\end{proof}

Let $R$ be an arbitrary \emph{subset} of $\H^2(\gfrak_1)$ and consider the induced restriction homomorphism 
\[ \res_R : \Sfr \xrightarrow{s \mapsto (s,\bullet)_\Sfr}\Hom(\H^2(\gfrak_1),\Lambda) \xrightarrow{{\rm restriction}} \Fun(R,\Lambda). \]
We denote the image of $\res_R$ by $\Sfr_R$.
As before, we will keep $\Gfrak$ implicit in the notation for $\Sfr_R$.
Note that the kernel $R^\perp$ of $\Sfr \rightarrow \Sfr_R$ is precisely the annihilator of $R$ with respect to $(\bullet,\bullet)_\Sfr$:
\[ R^\perp = \{ s \in \Sfr \ : \ \forall r \in R, \ (s,r)_{\Sfr} = 0 \}. \]

\begin{theorem}
\label{thm:main-general-groups}
In the notation above, let $R$ be an arbitrary generating set for $\ker(\H^2(\gfrak_1) \rightarrow \H^2(\Gfrak))$.
Then $R$ induces a canonical isomorphism $\Omega_R : \gfrak_2 \rightarrow \Sfr_R$ defined as follows:
\begin{enumerate}
\item For $\sigma,\tau \in \gfrak_1$, $\Omega_R([\sigma,\tau]) = \res_R(\sigma\otimes\tau-\tau\otimes\sigma,0)$.
\item For $\sigma \in \gfrak_1$, $\Omega_R(\sigma^\pi) = \res_R({n \choose 2} \sigma \otimes \sigma,\sigma)$.
\end{enumerate}
\end{theorem}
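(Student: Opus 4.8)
The plan is to construct $\Omega_R$ by descending the surjection $-d_2^\vee : \Sfr \rightarrow \gfrak_2$ of Lemma \ref{lemma-mod-n-frattini} through the quotient $\res_R : \Sfr \twoheadrightarrow \Sfr_R$, and then to verify that this descent is in fact an isomorphism. First I would observe that both maps are surjections of profinite groups, so everything comes down to comparing kernels. Specifically, $\Sfr_R = \Sfr/R^\perp$ by the very definition of $\res_R$, while $\gfrak_2 = \Sfr/\ker(-d_2^\vee)$ by Lemma \ref{lemma-mod-n-frattini}. Thus the theorem will follow if I can prove the single equality of closed subgroups
\[ R^\perp = \ker(-d_2^\vee). \]
Granting this, the universal property of the quotient produces a unique continuous isomorphism $\Omega_R : \gfrak_2 \rightarrow \Sfr_R$ making the obvious triangle with $-d_2^\vee$ and $\res_R$ commute, and then formulas (1) and (2) read off immediately from Corollary \ref{cor:frattini-presentation}, which computes $-d_2^\vee$ on exactly the generators $(\sigma\otimes\tau-\tau\otimes\sigma,0)$ and $({n\choose 2}\sigma\otimes\sigma,\sigma)$.

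**The kernel computation.**

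The heart of the matter is therefore the equality $R^\perp = \ker(-d_2^\vee)$. I would approach this entirely through the two perfect pairings at hand. By definition $R^\perp = \{ s \in \Sfr : (s,r)_\Sfr = 0 \text{ for all } r \in R \}$. On the other side, Lemma \ref{lemma-mod-n-frattini} defines $-d_2^\vee$ by the adjunction $(-d_2^\vee(s),\phi)_{\gfrak_2} = (s,-d_2\phi)_\Sfr$, and since $(\bullet,\bullet)_{\gfrak_2}$ is a perfect pairing, $s \in \ker(-d_2^\vee)$ if and only if $(s,-d_2\phi)_\Sfr = 0$ for all $\phi \in \H^1(\Gfrak^{(2)})^{\gfrak_1}$. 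Because $-d_2 : \H^1(\Gfrak^{(2)})^{\gfrak_1} \rightarrow \H^2(\gfrak_1)$ is injective with image exactly $\ker(\H^2(\gfrak_1) \rightarrow \H^2(\Gfrak))$ (this is the exact sequence recalled before Lemma \ref{lemma-perf-pairing-H2}), the elements $-d_2\phi$ range precisely over that kernel. Hence
\[ \ker(-d_2^\vee) = \{ s \in \Sfr : (s,r)_\Sfr = 0 \text{ for all } r \in \ker(\H^2(\gfrak_1)\to\H^2(\Gfrak)) \}, \]
which is the annihilator of the whole subgroup $\ker(\H^2(\gfrak_1)\to\H^2(\Gfrak))$.

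**Reducing from the subgroup to a generating set.**

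It remains to see that annihilating the generating set $R$ is the same as annihilating the closed subgroup it generates. Here I would use that the pairing $(\bullet,\bullet)_\Sfr$ of Lemma \ref{lemma-perf-pairing-H2} is perfect and continuous (a Pontryagin-type duality), so that for a fixed $s$ the map $r \mapsto (s,r)_\Sfr$ is a continuous homomorphism $\H^2(\gfrak_1) \rightarrow \Lambda$. Such a homomorphism vanishes on $R$ if and only if it vanishes on the closed subgroup topologically generated by $R$; since $R$ generates $\ker(\H^2(\gfrak_1)\to\H^2(\Gfrak))$ by hypothesis, the two annihilators coincide, giving $R^\perp = \ker(-d_2^\vee)$ as desired. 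I expect this last continuity-and-generation step to be the only genuinely delicate point: one must be careful that $R$ generates the kernel \emph{topologically} and that the pairing is jointly continuous, so that vanishing on generators propagates to vanishing on limits. Everything else is a formal consequence of perfect duality plus the already-established Corollary \ref{cor:frattini-presentation}.
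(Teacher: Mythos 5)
Your proposal is correct and follows essentially the same route as the paper's own proof: both descend the surjection $-d_2^\vee$ of Lemma \ref{lemma-mod-n-frattini} through $\res_R$ by showing that $\ker(-d_2^\vee)$ equals $R^\perp$ (via the two pairings, exactness of the $d_2$--$\inf$ sequence, and the fact that annihilating $R$ is the same as annihilating the subgroup it generates), and both read off formulas (1) and (2) from Corollary \ref{cor:frattini-presentation}. The only difference is expository: you spell out the duality step in more detail than the paper does, and your flagged concern about topological generation is actually moot since $\H^2(\gfrak_1)$ is discrete, so any homomorphism to $\Lambda$ vanishing on $R$ automatically vanishes on the subgroup $R$ generates.
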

\begin{proof}
Recall the existence of the following exact sequence:
\[ 0 \rightarrow \H^1(\Gfrak^{(2)})^{\gfrak_1} \xrightarrow{d_2} \H^2(\gfrak_1) \xrightarrow{\inf} \H^2(\Gfrak) \]
and that, by Lemma \ref{lemma-mod-n-frattini}, the dual of $-d_2$ yields a surjective map $-d_2^\vee : \Sfr \rightarrow \gfrak_2$.
Also recall that, by Corollary \ref{cor:frattini-presentation}, $-d_2^\vee$ satisfies the following equalities:
\begin{enumerate}
\item $-d_2^\vee(\sigma\otimes\tau-\tau\otimes\sigma,0) = [\sigma,\tau].$
\item $-d_2^\vee({n \choose 2}\sigma\otimes\sigma,\sigma) = \sigma^\pi.$
\end{enumerate}
By the definition of $\Sfr_R$, one has a canonical surjective map $\res_R : \Sfr \rightarrow \Sfr_R$.
Thus, it suffices to prove that the kernel of $-d_2^\vee : \Sfr \rightarrow \gfrak_2$ is the same as the kernel of $\res_R : \Sfr \rightarrow \Sfr_R$.
Indeed, we then can define $\Omega_R(\delta) := \res_R(\delta_1)$ where $\delta_1 \in \Sfr$ is chosen with $-d_2^\vee(\delta_1) = \delta$.

As observed above, the kernel of $\res_R : \Sfr \rightarrow \Sfr_R$ is precisely $R^\perp$, the annihilator of $R$ with respect to the pairing $(\bullet,\bullet)_{\Sfr}$.
Since $R$ generates the image of $d_2 : \H^1(\Gfrak^{(2)})^{\gfrak_1} \rightarrow \H^2(\gfrak_1)$, we see that $R^\perp$ is precisely $(\imm d_2)^\perp = (\imm(-d_2))^\perp$.
Since the map $-d_2^\vee : \Sfr \rightarrow \gfrak_2$ was defined to be the dual of $-d_2$, we obtain our claim: that the kernels of $-d_2^\vee$ and $\res_R$ are identical.
\end{proof}

\begin{remark}[Functoriality]
\label{remark:functoriality-general}
The isomorphism $\Omega_R$ is functorial in the following sense.
Suppose that $\phi : \Gfrak \rightarrow \Hfrak$ is a homomorphism of profinite groups.
Then $\phi$ induces canonical homomorphisms $\phi^{(i)} : \Gfrak^{(i)} \rightarrow \Hfrak^{(i)}$ for each $i \geq 1$.
Therefore, $\phi$ induces canonical homomorphisms $\phi_i : \gfrak_i(\Gfrak) \rightarrow \gfrak_i(\Hfrak)$ which are compatible with $\phi^{(i)}$.
In particular, for all $\sigma,\tau \in \gfrak_1(\Gfrak)$, one has $\phi_2[\sigma,\tau] = [\phi_1\sigma,\phi_1\tau]$ and $\phi_2(\sigma^\pi) = (\phi_1\sigma)^\pi$.
Moreover, the maps $\phi$ and $\phi_1$ yield the following commutative diagram:
\[ 
\xymatrix{
\H^2(\gfrak_1(\Gfrak)) \ar[r]^-{\inf} & \H^2(\Gfrak) \\
\H^2(\gfrak_1(\Hfrak)) \ar[u]^{\phi_1^*} \ar[r]_-{\inf} & \H^2(\Hfrak) \ar[u]_{\phi^*}
}
\]

Suppose that $R(\Gfrak)$ denotes a generating set for $\ker(\H^2(\gfrak_1(\Gfrak)) \rightarrow \H^2(\Gfrak))$ and $R(\Hfrak)$ denotes a generating set for $\ker(\H^2(\gfrak_1(\Hfrak)) \rightarrow \H^2(\Hfrak))$, in such a way so that the homomorphism $\phi_1^* : \H^2(\gfrak_1(\Hfrak)) \rightarrow \H^2(\gfrak_1(\Gfrak))$ restricts to a map $R(\Hfrak) \rightarrow R(\Gfrak)$.
We will show that $\phi_1^*$ induces a canonical map $\phi_1^{**}:\Sfr_{R(\Gfrak)} \rightarrow \Sfr_{R(\Hfrak)}$ which is compatible with $\phi_2$ via Theorem \ref{thm:main-general-groups}.
In other words, we will show that the following diagram is commutative:
\begin{eqnarray}
\label{eqn:main-functr-diagram}
\xymatrix{
\gfrak_2(\Gfrak) \ar[r]^{\Omega_{R(\Gfrak)}} \ar[d]_{\phi_2} & \Sfr_{R(\Gfrak)}\ar[d]^{\phi_1^{**}} \\
\gfrak_2(\Hfrak) \ar[r]_{\Omega_{R(\Hfrak)}} & \Sfr_{R(\Hfrak)}
}
\end{eqnarray}
and that it is compatible with $[\bullet,\bullet]$ and $(\bullet)^\pi$ similarly to Theorem \ref{thm:main-general-groups}.

To see this, first observe that the dual of $\phi_1^* : \H^2(\gfrak_1(\Hfrak)) \rightarrow \H^2(\gfrak_1(\Gfrak))$, via $(\bullet,\bullet)_{\Sfr}$, yields a homomorphism $(\phi_1^*)^\vee : \Sfr(\Gfrak) \rightarrow \Sfr(\Hfrak)$.
Furthermore, since $\phi_1^*$ restricts to a function $R(\Hfrak) \rightarrow R(\Gfrak)$, we obtain a commutative diagram:
\begin{eqnarray}
\label{eqn:res-with-phi}
\xymatrix{
\Sfr(\Gfrak) \ar[d]_{(\phi_1^*)^\vee} \ar[r]^-{\res_{R(\Gfrak)}} & \Fun(R(\Gfrak),\Lambda) \ar[d]^{\phi_1^{**}}  \\
\Sfr(\Hfrak) \ar[r]_-{\res_{R(\Hfrak)}} & \Fun(R(\Hfrak),\Lambda)
}
\end{eqnarray}
Thus $\phi_1^{**}$ in (\ref{eqn:res-with-phi}) restricts to a canonical homomorphism $\phi_1^{**} : \Sfr_{R(\Gfrak)} \rightarrow \Sfr_{R(\Hfrak)}$ since these groups are the images of the horizontal maps in diagram (\ref{eqn:res-with-phi}).
Namely, we obtain a commutative diagram:
\begin{eqnarray}
\label{eqn:sfr-with-sfr-sub-r}
\xymatrix{
\Sfr(\Gfrak) \ar[d]_{(\phi_1^*)^\vee} \ar@{->>}[r]^-{\res_{R(\Gfrak)}} & \Sfr_{R(\Gfrak)} \ar[d]^{\phi_1^{**}}  \\
\Sfr(\Hfrak) \ar@{->>}[r]_-{\res_{R(\Hfrak)}} & \Sfr_{R(\Hfrak)}
}
\end{eqnarray}
where the horizontal arrows are surjective.

On the other hand, the functoriality of the LHS-spectral sequence yields the following commutative diagram:
\begin{eqnarray}
\label{eqn:LHS-commutative-square}
\xymatrix{
\H^1(\Gfrak^{(2)})^{\gfrak_1(\Gfrak)}  \ar[r]^{-d_2} & \H^2(\gfrak_1(\Gfrak)) \\
\H^1(\Hfrak^{(2)})^{\gfrak_1(\Hfrak)} \ar[u]^{(\phi^{(2)})^*} \ar[r]_{-d_2} & \H^2(\gfrak_1(\Hfrak)) \ar[u]_{\phi_1^*}
}
\end{eqnarray}
We observe that $(\phi^{(2)})^*$ in diagram (\ref{eqn:LHS-commutative-square}) is dual to $\phi_2$ via the pairing $(\bullet,\bullet)_{\gfrak_2}$ (see Lemma \ref{lemma-mod-n-frattini}).
Thus, dualizing diagram (\ref{eqn:LHS-commutative-square}) via the pairings $(\bullet,\bullet)_\Sfr$ and $(\bullet,\bullet)_{\gfrak_2}$ as in Lemma \ref{lemma-mod-n-frattini}, we obtain the commutative diagram:
\begin{eqnarray}
\label{eqn:dual-of-LHS}
\xymatrix{
\Sfr(\Gfrak) \ar[d]_{(\phi_1^*)^\vee} \ar@{->>}[r]^-{-d_2^\vee} & \gfrak_2(\Gfrak) \ar[d]^{\phi_2}  \\
\Sfr(\Hfrak) \ar@{->>}[r]_-{-d_2^\vee} & \gfrak_2(\Hfrak)
}
\end{eqnarray}
whose horizontal arrows are surjective.

Lastly, the commutativity of diagram (\ref{eqn:main-functr-diagram}) follows from the fact that the kernel of $\res_{R(\bullet)}$ from diagram (\ref{eqn:sfr-with-sfr-sub-r}) is precisely the same as the kernel of $-d_2^\vee : \Sfr(\bullet) \rightarrow \gfrak_2(\bullet)$ from diagram (\ref{eqn:dual-of-LHS}), for $\bullet = \Gfrak,\Hfrak$, along with the definition of $\Omega_{R(\bullet)}$; see the proof of Theorem \ref{thm:main-general-groups}.
Also, it follows from Corollary \ref{cor:frattini-presentation} and the proof of Theorem \ref{thm:main-general-groups} that for all $\sigma,\tau \in \gfrak_1(\Gfrak)$:
\begin{enumerate}
\item $\phi_1^{**}(\Omega_{R(\Gfrak)}[\sigma,\tau]) = \Omega_{R(\Hfrak)}(\phi_2[\sigma,\tau]) = \Omega_{R(\Hfrak)}([\phi_1 \sigma,\phi_1\tau])$ and
\item $\phi_1^{**}(\Omega_{R(\Gfrak)}(\sigma^\pi)) = \Omega_{R(\Hfrak)}(\phi_2(\sigma^\pi)) = \Omega_{R(\Hfrak)}((\phi_1\sigma)^\pi)$.
\end{enumerate}
\end{remark}

\section{The Heisenberg Group}
\label{sec:an-exampl-heis}

In this section we will explore an explicit example: the Heisenberg Group over $\Lambda$.
In particular, we will explicitly work through the consequences of Theorem \ref{thm:main-general-groups} resp. Remark \ref{remark:functoriality-general} for the Heisenberg group resp. homomorphisms to the Heisenberg group.

\subsection{Recalling Facts about the Heisenberg Group}
We denote by $\Hcal_\Lambda$ the Heisenberg group over $\Lambda$.
Namely, $\Hcal_\Lambda$ is the set of all $3 \times 3$ upper-triangular matrices with coefficients in the ring $\Lambda$, whose diagonal entries are $1$.
To simplify the notation we denote elements of $\Hcal_\Lambda$ as follows:
\[ h(a,b;c) := \left( \begin{array}{ccc} 1 & a & c \\ 0 & 1 & b \\ 0 & 0 & 1 \end{array}\right). \]
Thus, multiplication in $\Hcal_\Lambda$ works as follows:
\[ h(a,b;c) \cdot h(a',b';c') = h(a+a',b+b';c+c'+ab'). \]
A simple calculation shows that $\Hcal_\Lambda^{(3)}$ is trivial, the map $h(0,0;c) \mapsto c$ yields an isomorphism $\gfrak_2(\Hcal_\Lambda) \cong \Lambda$, and the map $h(a,b;c) \rightarrow (a,b)$ yields an isomorphism $\gfrak_1(\Hcal_\Lambda) \cong \Lambda^2$.
We will denote the image of $h(a,b;c)$ in $\gfrak_1(\Hcal_\Lambda)$ by $h_1(a,b)$, and we will denote $h(0,0;c)$ by $h_2(c)$.
A simple calculation shows that:
\begin{enumerate}
\item $[h_1(a,b),h_1(a',b')] = h_2(ab'-a'b)$.
\item $h_1(a,b)^\pi = h_2({n \choose 2} a b)$.
\end{enumerate}

Denote by $e_1,e_2$ the basis for $\Kfr(\Hcal_\Lambda) = \H^1(\gfrak_1(\Hcal_\Lambda))$ which is dual to $h_1(1,0),h_1(0,1)$.
In other words, the isomorphism $h_1^{-1} : \gfrak_1(\Hcal_\Lambda) \rightarrow \Lambda^2$ is precisely the map $(e_1,e_2)$.
This allows us to describe the equivalence class of $\Hcal_\Lambda$ as an extension of $\gfrak_1(\Hcal_\Lambda) \cong \Lambda^2$ by $\gfrak_2(\Hcal_\Lambda) \cong \Lambda$ as follows.
\begin{lemma}
\label{lemma:heisenberg-cup-product}
Consider $\Hcal_\Lambda$ as an extension of $\gfrak_1(\Hcal_\Lambda)$ by $\Lambda$ via the isomorphism $h_2 :  \Lambda \rightarrow \gfrak_2(\Hcal_\Lambda)$.
Then the class of $\Hcal_\Lambda$ in $\H^2(\gfrak_1(\Hcal_\Lambda))$ is given by $e_1 \cup e_2$.
\end{lemma}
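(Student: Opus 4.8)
The plan is to compute the cohomology class of the extension
\[ 1 \rightarrow \gfrak_2(\Hcal_\Lambda) \xrightarrow{h_2} \Hcal_\Lambda \rightarrow \gfrak_1(\Hcal_\Lambda) \rightarrow 1 \]
directly from an explicit set-theoretic section, and then recognize the resulting $2$-cocycle as a representative of $e_1 \cup e_2$. First I would fix the section $s : \gfrak_1(\Hcal_\Lambda) \rightarrow \Hcal_\Lambda$ given by $s(h_1(a,b)) = h(a,b;0)$, which is a genuine set-theoretic lift since $h_1^{-1} = (e_1,e_2)$ identifies $\gfrak_1(\Hcal_\Lambda)$ with $\Lambda^2$. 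The associated factor system is the function measuring the failure of $s$ to be multiplicative: for elements $g = h_1(a,b)$ and $g' = h_1(a',b')$ of $\gfrak_1(\Hcal_\Lambda)$ one has, using the multiplication rule $h(a,b;c)\cdot h(a',b';c') = h(a+a',b+b';c+c'+ab')$,
\[ s(g)\,s(g') \;=\; h(a,b;0)\,h(a',b';0) \;=\; h(a+a',b+b';ab') \;=\; s(gg')\cdot h_2(ab'). \]
Hence the normalized $2$-cocycle representing the extension class is $c(g,g') = ab'$, written in coordinates $g = (a,b)$, $g' = (a',b')$ under the identification $h_1^{-1} = (e_1,e_2)$.

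Next I would identify this cocycle cohomologically. Under the chosen coordinates, $e_1$ is the homomorphism $\gfrak_1(\Hcal_\Lambda) \rightarrow \Lambda$ reading off the first coordinate and $e_2$ reads off the second, so $e_1(g) = a$ and $e_2(g') = b'$. The standard inhomogeneous representative of the cup product $e_1 \cup e_2 \in \H^2(\gfrak_1(\Hcal_\Lambda))$ of two one-cocycles is exactly $(g,g') \mapsto e_1(g)\cdot e_2(g') = a b'$, which is literally the factor system computed above. This gives $e_1 \cup e_2 = [c]$ in $\H^2(\gfrak_1(\Hcal_\Lambda))$, as claimed.

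The only subtlety — and the step I expect to require the most care — is matching sign and normalization conventions: the cup product of $1$-cochains on the bar resolution and the factor system attached to an extension are each defined up to conventions that can introduce a sign or a coboundary, so one must confirm that the convention in which $\beta z$ and $x \cup y$ generate $\H^2(\gfrak_1)$ (as fixed in Lemma \ref{lemma-perf-pairing-H2}) is the same one under which the factor system $ab'$ equals $e_1 \cup e_2$ on the nose rather than $-e_1\cup e_2$ or $e_2 \cup e_1$. I would resolve this by pinning down the cup-product convention via the previously computed commutator and power relations for $\Hcal_\Lambda$, namely $[h_1(a,b),h_1(a',b')] = h_2(ab'-a'b)$ and $h_1(a,b)^\pi = h_2(\binom{n}{2}ab)$: pairing the extension class against $[\sigma,\tau]$ and $\sigma^\pi$ through the perfect pairing $(\bullet,\bullet)_\Sfr$ and comparing with Theorem \ref{thm:cocycle-calculations} forces the class to be $e_1 \cup e_2$ with the correct sign, since $\Phi$ and $\Psi$ evaluated against $e_1 \cup e_2$ reproduce precisely the coefficients $ab'-a'b$ and $\binom{n}{2}ab$. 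This cross-check both fixes the convention and independently confirms the result.
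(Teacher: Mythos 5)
Your proposal is correct and follows essentially the same route as the paper: both take the section $s(h_1(a,b)) = h(a,b;0)$, compute the factor system $s(g)s(g')s(gg')^{-1} = h_2(ab')$, and identify the resulting cocycle $(g,g') \mapsto ab'$ with the standard inhomogeneous representative $e_1(g)\cdot e_2(g')$ of $e_1 \cup e_2$. Your closing cross-check against the commutator and $n$th-power formulas via Theorem \ref{thm:cocycle-calculations} is a sound extra safeguard on conventions, but the paper simply asserts the cocycle identification directly, as the conventions already match on the nose.
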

\begin{proof}
The $2$-cocycle representing the equivalence class of $\Hcal_\Lambda$ in $\H^2(\gfrak_1(\Hcal_\Lambda))$ is defined as follows:
\[ (h_1(a,b),h_1(a',b')) \mapsto h(a,b;0)\cdot h(a',b';0) \cdot h(-(a+a'),-(b+b');0)^{-1} \in \gfrak_2(\Hcal_\Lambda). \]
A simple calculation shows that 
\[ h(a,b;0)\cdot h(a',b';0) \cdot h(-(a+b'),-(b+b');0)^{-1} = h(0,0;ab'). \]
Thus, a 2-cocycle representing the class of $\Hcal_\Lambda$ in $\H^2(\gfrak_1(\Hcal_\Lambda))$ is given by 
\[\xi : (h_1(a,b),h_1(a',b')) \mapsto ab'.\]
Furthermore, since $e_i(h_1(a_1,a_2)) = a_i$ for $i = 1,2$, we see that the class of $\Hcal_\Lambda$ is indeed equal to $e_1 \cup e_2 \in \H^2(\gfrak_1(\Hcal_\Lambda))$ since $e_1 \cup e_2$ is represented by the same cocycle $\xi$. 
\end{proof}

By Lemma \ref{lemma:heisenberg-cup-product}, we see that $e_1 \cup e_2 \in \ker(\H^2(\gfrak_1(\Hcal_\Lambda)) \rightarrow \H^2(\Hcal_\Lambda))$ (see e.g. Proposition \ref{prop:embedding-problems-h2} below).
Moreover, the isomorphism $h_2^{-1} : \gfrak_2(\Hcal_\Lambda) \rightarrow \Lambda$, which we can consider as an element of $\H^1(\Hcal_\Lambda^{(2)})^{\gfrak_1(\Hcal_\Lambda)}$, satisfies $-d_2(h_2^{-1}) = e_1 \cup e_2$ by Theorem \ref{thm:cocycle-calculations} and the calculations above.
Since $h_2^{-1}$ generates $\H^1(\Hcal_\Lambda^{(2)})^{\gfrak_1(\Hcal_\Lambda)}$, we see that $e_1 \cup e_2$ is a generator for $\ker(\H^2(\gfrak_1(\Hcal_\lambda)) \rightarrow \H^2(\Hcal_\Lambda))$.
In particular, Theorem \ref{thm:main-general-groups} yields an isomorphism $\Omega_{e_1 \cup e_2} : \gfrak_2(\Hcal_\Lambda) \rightarrow \Lambda$ which satisfies the following properties for $\sigma,\tau \in \gfrak_1(\Hcal_\Lambda)$:
\begin{enumerate}
\item $\Omega_{e_1\cup e_2}([\sigma,\tau]) = \sigma(e_1)\tau(e_2)-\sigma(e_2)\tau(e_1)$. 
\item $\Omega_{e_1\cup e_2}(\sigma^\pi) = {n\choose 2}\sigma(e_1)\sigma(e_2)$.
\end{enumerate}

\subsection{Recalling Facts About Embedding Problems}
An embedding problem $\mathcal{E}$ is a pair of homomorphisms of profinite groups with the same codomain:
\[ \mathcal{E} := (\widetilde \Hfrak \rightarrow \Hfrak \ ; \ \Gfrak \rightarrow \Hfrak). \]
A solution to the embedding problem $\mathcal{E}$ is a homomorphism $\Gfrak \rightarrow \widetilde\Hfrak$ which makes the following diagram commute:
\[
\xymatrix{
{} & \Gfrak  \ar[d] \ar@{.>}[ld]\\
\widetilde\Hfrak \ar[r] & \Hfrak
}
\]
If $\widetilde \Hfrak \rightarrow \Hfrak$ is a surjective homomorphism, which we consider as a group extension of $\Hfrak$ by $\ker(\widetilde\Hfrak \rightarrow \Hfrak)$, we denote the embedding problem $(\widetilde\Hfrak \rightarrow \Hfrak; \Gfrak \rightarrow \Hfrak)$ simply by $(\widetilde \Hfrak;\Gfrak \rightarrow \Hfrak)$.

\begin{proposition}
\label{prop:embedding-problems-h2}
Let $\xi \in \H^2(\Hfrak,A)$ represent two equivalent group extensions $\widetilde \Hfrak, \widetilde \Hfrak'$ of $\Hfrak$ by $A$, and let $\phi : \Gfrak \rightarrow \Hfrak$ be a homomorphism.
Then the following conditions are equivalent:
\begin{enumerate}
\item $(\widetilde \Hfrak;\phi)$ has a solution.
\item $(\widetilde \Hfrak';\phi)$ has a solution.
\item $\xi$ is in the kernel of $\phi^* : \H^2(\Hfrak,A) \rightarrow \H^2(\Gfrak,A)$.
\end{enumerate} 
\end{proposition}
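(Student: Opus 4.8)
The plan is to reduce the solvability of the embedding problem to the splitting of a pulled-back extension, and then to invoke the standard cohomological classification of extensions with abelian kernel.

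First I would introduce the fiber product $\phi^*\widetilde\Hfrak := \Gfrak \times_\Hfrak \widetilde\Hfrak$, which fits into a short exact sequence $1 \to A \to \phi^*\widetilde\Hfrak \to \Gfrak \to 1$ and carries a canonical map $\phi^*\widetilde\Hfrak \to \widetilde\Hfrak$ lying over $\phi$. The key observation is that solutions of $(\widetilde\Hfrak; \phi)$ are in natural bijection with continuous homomorphic splittings $\Gfrak \to \phi^*\widetilde\Hfrak$ of this pulled-back extension: by the universal property of the fiber product, a lift $\Gfrak \to \widetilde\Hfrak$ of $\phi$ together with $\mathrm{id}_\Gfrak$ determines a section, and conversely post-composing a section with $\phi^*\widetilde\Hfrak \to \widetilde\Hfrak$ recovers a lift. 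This step is purely formal and requires no computation.

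Next I would apply the standard fact that, for a profinite extension with abelian kernel $A$, the associated class in $\H^2(\Gfrak, A)$ vanishes if and only if the extension admits a continuous homomorphic splitting. Since the class of $\phi^*\widetilde\Hfrak$ is exactly $\phi^*\xi$ by functoriality of group cohomology, this yields the equivalence of (1) and (3): a solution exists if and only if $\phi^*\widetilde\Hfrak$ splits, if and only if $\phi^*\xi = 0$. The equivalence of (1) and (2) is then immediate, since $\widetilde\Hfrak$ and $\widetilde\Hfrak'$ are represented by the same class $\xi$, so condition (3) is insensitive to the chosen representative; alternatively, an equivalence of extensions is an isomorphism $\widetilde\Hfrak \to \widetilde\Hfrak'$ over $\Hfrak$ and under $A$, and composing a solution with it transports solutions between the two embedding problems.

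The only point that genuinely requires care is the profinite bookkeeping: one must ensure that the splitting produced from the vanishing of the cohomology class can be chosen continuous, which rests on the existence of continuous set-theoretic sections of surjections of profinite groups and on the fact that continuous-cochain $\H^2$ really does classify continuous extensions with abelian kernel. Granting these standard facts, every remaining step is formal.
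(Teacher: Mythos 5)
Your proposal is correct and follows essentially the same route as the paper: both arguments hinge on the fiber product $\widetilde\Hfrak \times_\Hfrak \Gfrak$, the identification of its class with $\phi^*\xi$, and the fact that solutions of the embedding problem correspond to splittings of this pulled-back extension, with the equivalence of (1) and (2) handled by the isomorphism of equivalent extensions over $\Hfrak$. Your explicit flagging of the profinite continuity issues is a reasonable addition, but the underlying argument is the same.
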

\begin{proof}
Since $\widetilde \Hfrak$ and $\widetilde\Hfrak'$ are equivalent, one has a commutative diagram with exact rows:
\[ 
\xymatrix{
1 \ar[r] & A \ar[r] \ar@{=}[d]& \widetilde \Hfrak \ar[d]^-{\cong}\ar[r] & \Hfrak \ar@{=}[d]\ar[r] & 1 \\
1 \ar[r] & A \ar[r] & \widetilde \Hfrak' \ar[r] & \Hfrak \ar[r] & 1 \\
}
\]
From this it is clear that (1) and (2) are equivalent.

Assume that $(\widetilde\Hfrak;\phi)$ has a solution, say $\tilde\phi$. 
Recall that $\phi^*\xi$ has a representative group extension given by the fiber product $\widetilde \Hfrak \times_\Hfrak \Gfrak$.
The universal property of fiber products applied to $\tilde\phi$ yields a splitting of the short exact sequence
\[  1 \rightarrow A \rightarrow \widetilde \Hfrak \times_\Hfrak \Gfrak \rightarrow \Gfrak \rightarrow 1. \]
Thus $\phi^*\xi$ is trivial as it is represented by a split extension.

Similarly, if $\phi^*\xi$ is trivial, then there is a splitting $\Gfrak \rightarrow \widetilde\Hfrak \times_\Hfrak \Gfrak$ of the short exact sequence above.
Composing this splitting with the canonical map $\widetilde \Hfrak \times_\Hfrak \Gfrak \rightarrow \widetilde \Hfrak$ yields a solution to the embedding problem $(\widetilde \Hfrak;\phi)$.
\end{proof}

By Proposition \ref{prop:embedding-problems-h2}, it makes sense to define embedding problems of the form $(\xi;\phi)$, for $\xi \in \H^2(\Hfrak,A)$ and $\phi : \Gfrak \rightarrow \Hfrak$.
Saying that $(\xi;\phi)$ has a solution is equivalent to saying that $\phi^*\xi = 0$ in $\H^2(\Gfrak)$.

\begin{proposition}
\label{prop:heisenberg-embedding-problems}
Let $\Gfrak$ be a profinite group.
Let $x,y \in \Kfr(\Gfrak) = \H^1(\gfrak_1(\Gfrak))$ be given and consider the homomorphism $(x,y) : \gfrak_1(\Gfrak) \rightarrow \Lambda^2$ induced by $x,y$.
Then the embedding problem 
\[ \mathcal{E}_{x,y} := (\Hcal_\Lambda \twoheadrightarrow \gfrak_1(\Hcal_\Lambda) \xrightarrow{(e_1,e_2)} \Lambda^2 \ ; \ \Gfrak/\Gfrak^{(3)} \twoheadrightarrow \gfrak_1(\Gfrak) \xrightarrow{(x,y)} \Lambda^2) \]
has a solution if and only if the image of $x \cup y$ vanishes in $\H^2(\Gfrak)$.
\end{proposition}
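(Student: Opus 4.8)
The plan is to reduce the given embedding problem, whose base group is $\Gfrak/\Gfrak^{(3)}$, to the ``same'' embedding problem with base group $\Gfrak$ itself, and then to invoke Proposition \ref{prop:embedding-problems-h2} together with Lemma \ref{lemma:heisenberg-cup-product}. The crucial observation is that $\Hcal_\Lambda^{(3)} = 1$. Since any homomorphism $f \colon \Gfrak \to \Hcal_\Lambda$ satisfies $f(\Gfrak^{(i)}) \subseteq \Hcal_\Lambda^{(i)}$ for all $i$ (this is the functoriality of the mod-$n$ central descending series recalled in Remark \ref{remark:functoriality-general}), every such $f$ kills $\Gfrak^{(3)}$ and hence factors uniquely through $\Gfrak/\Gfrak^{(3)}$. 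Consequently, writing $\psi \colon \Gfrak \twoheadrightarrow \gfrak_1(\Gfrak) \xrightarrow{(x,y)} \Lambda^2$ for the composite, solutions of $\mathcal{E}_{x,y}$ correspond bijectively to solutions of the embedding problem $(\Hcal_\Lambda \twoheadrightarrow \gfrak_1(\Hcal_\Lambda) \xrightarrow{(e_1,e_2)} \Lambda^2 \,;\, \psi)$ whose base group is $\Gfrak$. In particular, one has a solution if and only if the other does.

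Next I would apply Proposition \ref{prop:embedding-problems-h2} to this second embedding problem. Regarding $\Hcal_\Lambda$ as an extension of $\gfrak_1(\Hcal_\Lambda) \cong \Lambda^2$ by $\gfrak_2(\Hcal_\Lambda) \cong \Lambda$, Lemma \ref{lemma:heisenberg-cup-product} identifies its class in $\H^2(\gfrak_1(\Hcal_\Lambda))$ with $e_1 \cup e_2$. Proposition \ref{prop:embedding-problems-h2} then asserts that the embedding problem has a solution if and only if $\psi^*(e_1 \cup e_2) = 0$ in $\H^2(\Gfrak)$.

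It remains to compute $\psi^*(e_1 \cup e_2)$. Factoring $\psi$ as $\Gfrak \twoheadrightarrow \gfrak_1(\Gfrak) \xrightarrow{(x,y)} \gfrak_1(\Hcal_\Lambda)$ and using naturality of the cup product, one has $\psi^* = \inf \circ (x,y)^*$. Since $(e_1,e_2) = h_1^{-1}$ realizes $e_1,e_2$ as the coordinate functionals on $\gfrak_1(\Hcal_\Lambda)$, the pullback $(x,y)^*$ sends $e_1 \mapsto x$ and $e_2 \mapsto y$; hence $(x,y)^*(e_1 \cup e_2) = x \cup y$ in $\H^2(\gfrak_1(\Gfrak))$, and therefore $\psi^*(e_1 \cup e_2) = \inf(x \cup y)$ is precisely the image of $x \cup y$ under the inflation $\H^2(\gfrak_1(\Gfrak)) \to \H^2(\Gfrak)$. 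Combining the three steps yields the asserted equivalence.

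I expect the only genuinely substantive point to be the first step, namely the reduction of the base group from $\Gfrak/\Gfrak^{(3)}$ to $\Gfrak$, since everything afterward is a direct citation of Proposition \ref{prop:embedding-problems-h2} and a formal cup-product computation. An alternative that keeps the base group $\Gfrak/\Gfrak^{(3)}$ would apply Proposition \ref{prop:embedding-problems-h2} directly to land in $\H^2(\Gfrak/\Gfrak^{(3)})$ and then check that $\ker(\inf \colon \H^2(\gfrak_1) \to \H^2(\Gfrak/\Gfrak^{(3)}))$ and $\ker(\inf \colon \H^2(\gfrak_1) \to \H^2(\Gfrak))$ coincide; this equality follows from the LHS exact sequence $0 \to \H^1(\bullet^{(2)})^{\gfrak_1} \xrightarrow{d_2} \H^2(\gfrak_1) \xrightarrow{\inf} \H^2(\bullet)$ for both $\bullet = \Gfrak$ and $\bullet = \Gfrak/\Gfrak^{(3)}$, together with the observation that $\H^1(\Gfrak^{(2)})^{\gfrak_1} = \Hom(\gfrak_2,\Lambda) = \H^1((\Gfrak/\Gfrak^{(3)})^{(2)})^{\gfrak_1}$ compatibly with $d_2$, so that the two differentials have the same image. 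The first route is cleaner, and is the one I would carry out.
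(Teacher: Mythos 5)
Your proof is correct and uses essentially the same ingredients and strategy as the paper's: Lemma \ref{lemma:heisenberg-cup-product} to identify the extension class as $e_1 \cup e_2$, Proposition \ref{prop:embedding-problems-h2} to translate solvability into vanishing of the pullback, and the fact that $\Hcal_\Lambda^{(3)} = 1$ (via functoriality of the descending series) to pass between base group $\Gfrak$ and $\Gfrak/\Gfrak^{(3)}$. The only difference is organizational — you set up the bijection of solution sets once and apply Proposition \ref{prop:embedding-problems-h2} over $\Gfrak$, whereas the paper argues the two implications separately (applying the proposition over $\Gfrak/\Gfrak^{(3)}$ and inflating in one direction, and over $\Gfrak$ and descending in the other) — but this is the same argument.
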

\begin{proof}
First assume that $\mathcal{E}_{x,y}$ has a solution.
The fact that $x \cup y$ vanishes in $\H^2(\Gfrak)$ follows from Proposition \ref{prop:embedding-problems-h2} and the fact that $e_1 \cup e_2$ generates the kernel of $\H^2(\gfrak_1(\Hcal_\Lambda)) \rightarrow \H^2(\Hcal_\Lambda)$ (see the discussion which follows Lemma \ref{lemma:heisenberg-cup-product}).
Indeed, Proposition \ref{prop:embedding-problems-h2} implies that the image of $x \cup y$ vanishes in $\H^2(\Gfrak/\Gfrak^{(3)})$, and thus the image of $x \cup y$ also vanishes in $\H^2(\Gfrak)$.

Conversely, assume that the image of $x \cup y$ vanishes in $\H^2(\Gfrak)$.
Then by Proposition \ref{prop:embedding-problems-h2} and Lemma \ref{lemma:heisenberg-cup-product}, we obtain a solution $\phi_0 : \Gfrak \rightarrow \Hcal_\Lambda$ for the embedding problem:
\[ \widetilde{\mathcal{E}}_{x,y} := (\Hcal_\Lambda \twoheadrightarrow \gfrak_1(\Hcal_\Lambda) \xrightarrow{(e_1,e_2)} \Lambda^2 \ ; \ \Gfrak \twoheadrightarrow \gfrak_1(\Gfrak) \xrightarrow{(x,y)} \Lambda^2). \]
However, since $\Hcal_\Lambda^{(3)} = 0$, the solution $\phi_0$ descends to a solution $\phi : \Gfrak/\Gfrak^{(3)} \rightarrow \Hcal_\Lambda$ for our original embedding problem $\mathcal{E}_{x,y}$.
\end{proof}

\subsection{Relations Induced by the Heisenberg Group}

In this section we show how one can determine certain kinds of relations between elements of the form $[\sigma,\tau]$ and $\sigma^\pi$, by looking at homomorphisms $\gfrak_1 \rightarrow \Lambda^2$ which lift to homomorphisms $\Gfrak/\Gfrak^{(3)} \rightarrow \Hcal_\Lambda$.
We briefly recall the notation introduced in Remark \ref{remark:functoriality-general}: for a homomorphism $\phi : \Gfrak \rightarrow \Hfrak$, one obtains induced homomorphisms $\phi_i : \gfrak_i(\Gfrak) \rightarrow \gfrak_i(\Hfrak)$ which satisfy $\phi_2[\sigma,\tau] = [\phi_1\sigma,\phi_1\tau]$ and $\phi_2(\sigma^\pi) = (\phi_1(\sigma))^\pi$ for all $\sigma,\tau \in \gfrak_1(\Gfrak)$.

Let us first make a couple of observations concerning convergence of elements in profinite groups.
Let $\Gfrak$ be an arbitrary profinite group and let $\sigma_i \in \Gfrak$ be given, with a possibly infinite indexing set for $i$.
Recall that the collection $(\sigma_i)_i$ \emph{converges to $1$} provided that, for all finite-index normal subgroups $N$ of $\Gfrak$, all but finitely many of the $\sigma_i$ are contained in $N$.
If $\Gfrak$ is an abelian profinite group, and $(\sigma_i)_i$ converges to $1$, then one obtains a well-defined element $\prod_i \sigma_i$ of $\Gfrak$, since the product is well-defined in every finite quotient of $\Gfrak$.

Assume that one is given a collection $(\sigma_i)_i$ of elements $\sigma_i \in \gfrak_1(\Gfrak)$.
Then $(\sigma_i)_i$ converges to $0$ in $\gfrak_1(\Gfrak)$ if and only if for all $x \in \Kfr(\Gfrak) = \H^1(\gfrak_1(\Gfrak))$, all but finitely many of the $\sigma_i(x)$ are trivial.
Furthermore, since $[\bullet,\bullet]$ and $(\bullet)^\pi$ are continuous, if $(\sigma_i,\tau_i,\gamma_j)_{i,j}$ converges to $0$ in $\gfrak_1(\Gfrak)$, then $([\sigma_i,\tau_i],\gamma_j^\pi)_{i,j}$ converges to $0$ in $\gfrak_2(\Gfrak)$.

\begin{proposition}
\label{prop:heisenberg}
Let $\Gfrak$ be a profinite group with $\gfrak_1(\Gfrak)$ isomorphic to $(\Z/n)^I$ for some indexing set $I$.
Let $x,y \in \Kfr(\Gfrak) = \H^1(\gfrak_1(\Gfrak))$ be given and consider the homomorphism $(x,y) : \gfrak_1(\Gfrak) \rightarrow \Lambda^2$ induced by $x,y$.
Suppose furthermore that the equivalent conditions of Proposition \ref{prop:heisenberg-embedding-problems} hold, and that $\phi : \Gfrak/\Gfrak^{(3)} \rightarrow \Hcal_\Lambda$ is a solution to the embedding problem:
\[ \mathcal{E}_{x,y} := (\Hcal_\Lambda \twoheadrightarrow \gfrak_1(\Hcal_\Lambda) \xrightarrow{(e_1,e_2)} \Lambda^2 \ ; \ \Gfrak/\Gfrak^{(3)} \twoheadrightarrow \gfrak_1(\Gfrak) \xrightarrow{(x,y)} \Lambda^2). \]
Let $\sigma_i,\tau_i \in \gfrak_1(\Gfrak)$ be given, and assume that the collection $(\sigma_i,\tau_i)_i$ converges to $0$ in $\gfrak_1(\Gfrak)$.
Then the following are equivalent:
\begin{enumerate}
\item $\sum_i[\phi_1\sigma_i,\phi_1\tau_i] = 0$, as elements of $\gfrak_2(\Hcal_\Lambda)$.
\item $\sum_i (\sigma_i(x) \cdot \tau_i(y) - \sigma_i(y) \cdot \tau_i(x)) = 0$.
\end{enumerate}
\end{proposition}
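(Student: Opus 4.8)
The plan is to compute both sides of the asserted equivalence completely explicitly inside $\gfrak_2(\Hcal_\Lambda) \cong \Lambda$, thereby reducing the statement to the injectivity of the isomorphism $h_2 : \Lambda \to \gfrak_2(\Hcal_\Lambda)$. The two facts I would gather first are the explicit commutator formula $[h_1(a,b),h_1(a',b')] = h_2(ab'-a'b)$ recalled above, and the precise meaning of the hypothesis that $\phi$ is a solution to $\mathcal{E}_{x,y}$.

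The first real step is to unwind the solution condition at the level of the induced map $\phi_1 : \gfrak_1(\Gfrak) \to \gfrak_1(\Hcal_\Lambda)$. Since $\phi$ solves $\mathcal{E}_{x,y}$, composing $\phi$ with the projection $\Hcal_\Lambda \twoheadrightarrow \gfrak_1(\Hcal_\Lambda)$ followed by $(e_1,e_2)$ recovers the map $\Gfrak/\Gfrak^{(3)} \twoheadrightarrow \gfrak_1(\Gfrak) \xrightarrow{(x,y)} \Lambda^2$. Passing to $\gfrak_1$ and using that $(e_1,e_2) = h_1^{-1}$, this says exactly that $(e_1,e_2)(\phi_1\sigma) = (\sigma(x),\sigma(y))$ for every $\sigma \in \gfrak_1(\Gfrak)$, or equivalently
\[ \phi_1\sigma = h_1(\sigma(x),\sigma(y)). \]

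With this identification in hand, the second step is a direct computation. For each index $i$, applying the commutator formula with $a = \sigma_i(x)$, $b = \sigma_i(y)$, $a' = \tau_i(x)$, $b' = \tau_i(y)$ gives
\[ [\phi_1\sigma_i,\phi_1\tau_i] = h_2\big(\sigma_i(x)\tau_i(y) - \sigma_i(y)\tau_i(x)\big). \]
Before summing I would check that the infinite sum is meaningful: since $(\sigma_i,\tau_i)_i$ converges to $0$ in $\gfrak_1(\Gfrak)$, the characterization of convergence to $0$ recalled before the proposition, applied to the two fixed elements $x,y \in \Kfr(\Gfrak)$, shows that all but finitely many of the scalars $\sigma_i(x),\sigma_i(y),\tau_i(x),\tau_i(y)$ vanish. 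Hence all but finitely many of the commutators above are trivial, and both sums in the statement are genuinely finite. Summing and using that $h_2$ is a homomorphism then yields
\[ \sum_i [\phi_1\sigma_i,\phi_1\tau_i] = h_2\!\left(\sum_i \big(\sigma_i(x)\tau_i(y) - \sigma_i(y)\tau_i(x)\big)\right). \]

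Finally, because $h_2 : \Lambda \to \gfrak_2(\Hcal_\Lambda)$ is an isomorphism, the left-hand side vanishes if and only if its argument does, which is exactly the equivalence of (1) and (2). I do not expect a genuine obstacle here: the entire content lies in correctly translating the embedding-problem hypothesis into the clean formula $\phi_1\sigma = h_1(\sigma(x),\sigma(y))$, plus the routine bookkeeping guaranteeing the sums are well-defined; everything else is the recalled commutator identity together with the injectivity of $h_2$.
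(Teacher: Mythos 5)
Your proof is correct, but it takes a genuinely more elementary route than the paper's. You unwind the embedding-problem condition to get the identity $\phi_1\sigma = h_1(\sigma(x),\sigma(y))$, compute each commutator explicitly via $[h_1(a,b),h_1(a',b')] = h_2(ab'-a'b)$, verify from the convergence hypothesis that both sums are effectively finite, and conclude from the injectivity of $h_2 : \Lambda \rightarrow \gfrak_2(\Hcal_\Lambda)$. The paper instead runs the argument through its formal machinery: since $\phi_1^*(e_1\cup e_2) = x\cup y$ lies in $\ker(\H^2(\gfrak_1(\Gfrak)) \rightarrow \H^2(\Gfrak))$, it chooses a generating set $R$ containing $x \cup y$, invokes the isomorphisms $\Omega_R$ and $\Omega_{e_1\cup e_2}$ of Theorem \ref{thm:main-general-groups}, and uses the compatibility of the restriction map $\phi_1^{**} : \Sfr_R \rightarrow \Sfr_{e_1\cup e_2}$ with $\phi_2$ (Remark \ref{remark:functoriality-general}); evaluating $\Omega_R(\sum_i[\sigma_i,\tau_i])$ at the generator $x\cup y$ then produces exactly the sum in (2). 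Your approach buys self-containedness: it needs nothing beyond the recalled facts about $\Hcal_\Lambda$, and it uses the hypothesis $\gfrak_1(\Gfrak)\cong(\Z/n)^I$ only through the duality description of convergence to $0$. The paper's approach buys an illustration of how the $\Omega_R$-formalism is meant to be applied, which is the point of the section, and it places the computation in the same framework used later for Theorem \ref{thm:application-to-relations}. One point where your write-up is in fact more careful than the paper's: the paper defines $\phi_1$ outright by the formula $\sigma \mapsto h_1(\sigma(x),\sigma(y))$ and tacitly identifies it with the map induced by $\phi$, whereas you derive that identification from the definition of a solution to $\mathcal{E}_{x,y}$ --- a step that is genuinely needed, since statement (1) refers to the induced map.
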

\begin{proof}
Recall that $e_1 \cup e_2$ generates $\ker(\H^2(\gfrak_1(\Hcal_\Lambda)) \rightarrow \H^2(\Hcal_\Lambda))$.
Moreover, if we denote by $\phi_1$ the homomorphism $\gfrak_1(\Gfrak) \rightarrow \gfrak_1(\Hcal_\Lambda)$ defined by $\sigma \mapsto h_1(\sigma(x),\sigma(y))$, then the image of $e_1 \cup e_2$ under the induced map $\phi_1^* : \H^2(\gfrak_1(\Hcal_\Lambda)) \rightarrow \H^2(\gfrak_1(\Gfrak))$ is precisely $x \cup y$.
Our assumptions ensure that $x \cup y$ is contained in $\ker(\H^2(\gfrak_1(\Gfrak)) \rightarrow \H^2(\Gfrak))$.
Therefore, we can choose $R$, a set of generators for $\ker(\H^2(\gfrak_1(\Gfrak)) \rightarrow \H^2(\Gfrak))$, which contains $x \cup y$.

Consider the isomorphisms $\Omega_{e_1 \cup e_2} : \gfrak_2(\Hfrak_\Lambda) \rightarrow \Sfr_{e_1 \cup e_2}$ and $\Omega_R : \gfrak_2(\Gfrak) \rightarrow \Sfr_R$ of Theorem \ref{thm:main-general-groups}.
Since $\phi_1^*(e_1 \cup e_2) = x \cup y \in R$, we obtain a canonical restriction map $\phi_1^{**} : \Sfr_R \rightarrow \Sfr_{e_1 \cup e_2}$ which is compatible with the canonical map $\phi_2 : \gfrak_2(\Gfrak) \rightarrow \gfrak_2(\Hcal_\Lambda)$ as discussed in Remark \ref{remark:functoriality-general}.
Therefore, the restriction to $e_1 \cup e_2$, under the inclusion $e_1 \cup e_2 \mapsto x \cup y \subset R$, of the element
\[\Omega_R(\sum_i [\sigma_i,\tau_i]) = \sum_i \Omega_R([\sigma_i,\tau_i]) = \sum_i \res_R(\sigma_i \otimes\tau_i-\tau_i\otimes\sigma_i,0), \]
is trivial if and only if $\phi_2(\sum_i[\sigma_i,\tau_i]) = 0$ in $\gfrak_2(\Hcal_\Lambda)$.
In other words, $\phi_2(\sum_i[\sigma_i,\tau_i]) = 0$ if and only if 
\[ \sum_i \sigma_i(x)\tau_i(y)-\sigma_i(y)\tau_i(x) = 0. \]
This proves the proposition since $\phi_2(\sum_i[\sigma_i,\tau_i]) = \sum_i[\phi_1\sigma_i,\phi_1\tau_i]$.
\end{proof}

In \S\ref{sec:applications}, we will use Proposition \ref{prop:heisenberg} to explicitly describe certain kinds of relations among elements of the form $[\sigma,\tau]$ and $\sigma^\pi$ in $\gfrak_2(K)$ for a field $K$ whose characteristic is prime to $n$ with $\mu_n \subset K$; see Theorem \ref{thm:application-to-relations} for the details.

\section{Galois Groups}
\label{sec:proof-main-theorem}

The goal for this section will be to prove Theorem \ref{thm:main-thm-ffrak}.
Throughout this section $K$ will be a field whose characteristic is prime to $n$ with $\mu_n \subset K$ and $\omega \in \mu_n$ will be a fixed primitive $n$th root of unity.
In many cases we will denote $\H^*(\Gal(K),\bullet)$ by $\H^*(K,\bullet)$ as is usual; when we wish to emphasize the connection with previous notation, we will still use the notation $\H^*(\Gal(K),\bullet)$.
Continuing with the previous notation, our profinite group $\Gfrak$ will be $\Gal(K)$ in the remainder of the note.
Kummer theory yields a canonical perfect pairing
\[ \gfrak_1(K) \times K^\times/n \rightarrow \mu_n. \]
Thus, we will identify $K^\times/n$ with $\Kfr = \H^1(\gfrak_1(K))$ via our choice of $\omega \in \mu_n$, as follows.
For $x \in K^\times$, we denote by $x_\omega$ the homomorphism $\gfrak_1(K) \rightarrow \Lambda$ defined by $x_\omega(\sigma) = i$ if and only if $\sigma\sqrt[n]{x}/\sqrt[n]{x} = \omega^i$.
Thus the map $x \mapsto x_\omega$ yields an isomorphism $K^\times/n \rightarrow \Kfr = \H^1(\gfrak_1(K))$.
Lastly, as in \S\ref{sec:cohom-frat-covers}, we identify $\gfrak_1(K)$ canonically with $\Hom(\Kfr,\Lambda)$.
The connection with the notation of sections \ref{sec:notat-main-theor} and \ref{sec:cohom-frat-covers} is as follows: for $\sigma \in \gfrak_1(K)$ and $x \in K^\times$, one has $x_\omega \in \Kfr$ and $\sigma^\omega(x) = \sigma(x_\omega)$.

The following lemma calculates the Bockstein map in Galois cohomology.
This calculations seems to be fairly well-known: see \cite{Gille2006} Lemma 7.5.10 and/or \cite{Efrat2011b} Proposition 2.6.
We provide a precise proof of the lemma for the sake of completeness.
\begin{lemma}
\label{lem:bock-in-galois-cohom}
Let $K$ be a field whose characteristic is prime to $n$ with $\mu_n \subset K$ and let $\omega \in \mu_n$ be a chosen primitive $n$th root of unity.
Denote by $\inf_K$ the inflation map $\H^2(\gfrak_1(K)) \rightarrow \H^2(\Gal(K))$.
One has the following equality in $\H^2(\Gal(K))$ for all $x \in K^\times$:
\[ \inf_K(\beta x_\omega) = \inf_K(\omega_\omega \cup x_\omega). \]
\end{lemma}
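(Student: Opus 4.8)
The plan is to first reduce the statement to an identity in ordinary Galois cohomology, and then to establish that identity by comparing two short exact sequences of coefficient modules. Since $\inf_K$ is induced by the quotient map $\Gal(K)\to\gfrak_1(K)$, it commutes with the Bockstein (a connecting homomorphism, natural in the group for the fixed coefficient sequence $1\to\Z/n\xrightarrow{n}\Z/n^2\to\Z/n\to1$) and, being a ring homomorphism, it is compatible with $\cup$. Using that $\inf:\H^1(\gfrak_1(K))\to\H^1(\Gal(K))$ is an isomorphism carrying the class $x_\omega$ to the Kummer class of $x$, the asserted equality becomes the purely Galois-cohomological identity
\[ \beta(x_\omega) = \omega_\omega \cup x_\omega \quad\text{in } \H^2(\Gal(K),\Lambda), \]
where $\beta$ now denotes the Bockstein on $\H^*(\Gal(K))$. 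Via $\omega$ I identify $\Lambda$ with $\mu_n$ throughout, so that $x_\omega$ is the usual Kummer class of $x$ in $\H^1(\Gal(K),\mu_n)$.

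Next I would introduce, alongside the ``constant'' sequence $A:0\to\Z/n\xrightarrow{n}\Z/n^2\to\Z/n\to0$ whose connecting map is $\beta$, the ``cyclotomic'' sequence
\[ B:\ 1\to\mu_n\to\mu_{n^2}\xrightarrow{(\cdot)^n}\mu_n\to1 \]
of $\Gal(K)$-modules, with connecting map $\delta_B:\H^1(\Gal(K),\mu_n)\to\H^2(\Gal(K),\mu_n)$. The first key point is that $\delta_B$ kills every Kummer class: the lift $\sigma\mapsto\sigma\sqrt[n^2]{x}/\sqrt[n^2]{x}\in\mu_{n^2}$ of the cocycle $\sigma\mapsto\sigma\sqrt[n]{x}/\sqrt[n]{x}$ produces a $2$-cocycle that is identically trivial; equivalently, $x_\omega$ lies in the image of $\H^1(\Gal(K),\mu_{n^2})\to\H^1(\Gal(K),\mu_n)$, so $\delta_B(x_\omega)=0$.

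Finally I would compare $\delta_B$ with $\delta_A=\beta$. Using $\omega$ on the sub and quotient and a fixed $\sqrt[n]{\omega}$ on the underlying abelian groups, both $A$ and $B$ become extensions of the trivial module $\Z/n$ by the trivial module $\Z/n$ with the \emph{same} underlying extension of abelian groups; they differ only in that the middle term of $B$ carries the mod-$n^2$ cyclotomic action. Hence their classes in $\operatorname{Ext}^1_{\Gal(K)}(\Z/n,\Z/n)$ differ by an element of the subgroup $\H^1(\Gal(K),\Z/n)$ (those extensions split over $\Z$), namely the character $\sigma\mapsto(\chi(\sigma)-1)/n$ that records the twist, where $\chi$ is the mod-$n^2$ cyclotomic character; and since $\sigma\sqrt[n]{\omega}/\sqrt[n]{\omega}=\omega^{(\chi(\sigma)-1)/n}$, this character is exactly $\omega_\omega$. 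Additivity of connecting maps in the extension class then gives $\delta_B=\beta\pm(\omega_\omega\cup-)$, whence $0=\delta_B(x_\omega)=\beta(x_\omega)\pm\omega_\omega\cup x_\omega$, which is the desired identity up to sign.

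I expect the main obstacle to be twofold: making precise and correctly oriented the claim that $\delta_B-\delta_A$ is cup product with the twist class, and pinning down all the signs (the Bockstein normalization, the anticommutativity $\omega_\omega\cup x_\omega=-x_\omega\cup\omega_\omega$, and the direction of the $\operatorname{Ext}$-difference). I would settle both at once by an explicit cocycle computation: lift the homomorphism $a=x_\omega:\Gal(K)\to\Z/n$ to integer representatives in $\Z/n^2$ and simultaneously to $\mu_{n^2}$, and read off that the discrepancy between the two resulting $2$-cocycles is precisely $(\sigma,\tau)\mapsto\omega_\omega(\sigma)\,a(\tau)$, a representative of $\omega_\omega\cup x_\omega$. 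This is the calculation carried out in \cite{Gille2006} Lemma 7.5.10 and \cite{Efrat2011b} Proposition 2.6, and it fixes the sign to be the one in the statement.
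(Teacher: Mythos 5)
Your proposal is correct in strategy but follows a genuinely different route from the paper's --- essentially the route of the sources the paper cites just before its proof (Gille--Szamuely and Efrat--Min\'a\v{c}) and then deliberately replaces. The paper stays inside the formalism of twisted coefficients: it uses the twist isomorphisms $\H^i(K,\Lambda)\cong \H^i(K,\Lambda(j))$ given by cupping with $\omega\in \H^0(K,\Lambda(1))$, the observation that the Kummer map is a degree-zero connecting homomorphism (whence $\delta_1(\omega)=\omega_\omega\cup\omega$), and then two applications of the compatibility of connecting homomorphisms with cup products (citing \cite{Neukirch2008} Proposition 1.4.3) to convert $\delta_0(\alpha)\cup\omega\cup\omega$ into $\delta_1(\omega)\cup(\alpha\cup\omega)$; it never invokes your key fact that the twisted Bockstein $\delta_B=\delta_1$ vanishes on $\H^1(\Gal(K),\mu_n)$. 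Your argument instead rests on exactly that vanishing (every class in $\H^1(\Gal(K),\mu_n)$ is a Kummer class, hence lifts through $\H^1(\Gal(K),\mu_{n^2})\to \H^1(\Gal(K),\mu_n)$) together with the two-lift cocycle comparison identifying $\delta_B-\beta$ with cup product by $\omega_\omega$. Your route is arguably the more robust one: each convention-sensitive step is an explicit cocycle identity, whereas the compatibility identities invoked by the paper are delicate here precisely because the Bockstein sequence does not remain exact after tensoring with $\Lambda(1)$; indeed, your vanishing statement shows that the paper's intermediate terms $\delta_1(\alpha\cup\omega)$ are identically zero while the outer terms are not, so those intermediate equalities cannot hold termwise --- the twist correction they omit is exactly the content of the lemma.

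The one concrete gap is your sign. With the normalization used elsewhere in the paper (in \S\ref{sec:cocycle-calculations} the carry cocycle ${\rm B}_x$ represents $\beta x$ with no sign), your own discrepancy formula reads $\delta_B(x_\omega)=\beta(x_\omega)+\omega_\omega\cup x_\omega$: the coboundary of the $\mu_{n^2}$-valued lift $\zeta^{\hat a}$ (with $\zeta^n=\omega$ and $\hat a$ an integer lift of $a=x_\omega$) equals the coboundary of the $\Z/n^2$-valued lift plus the cocycle $(\sigma,\tau)\mapsto\omega_\omega(\sigma)a(\tau)$. Combined with $\delta_B(x_\omega)=0$, this yields $\inf_K(\beta x_\omega)=-\inf_K(\omega_\omega\cup x_\omega)=\inf_K(x_\omega\cup\omega_\omega)$, i.e.\ the statement with the opposite sign; for $n>2$ this is a genuinely different assertion, since $2\cdot\inf_K(\omega_\omega\cup x_\omega)$ need not vanish. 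So your closing claim that the computation ``fixes the sign to be the one in the statement'' contradicts the discrepancy cocycle you wrote down: one of the two must be corrected, and which one depends on the normalization of connecting homomorphisms, a choice that has to be made consistently with the rest of the paper --- in particular with Lemma \ref{lem:ker-of-inf-galois}, where the precise sign determines whether $x_\omega\cup\omega_\omega+\beta x_\omega$ or $\omega_\omega\cup x_\omega+\beta x_\omega$ is the class that dies under inflation. Carry the two-lift computation to the end under one declared convention and state the sign it actually produces.
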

\begin{proof}
We denote by $\delta_i : \H^1(K,\Z/n(i)) \rightarrow \H^2(K,\Z/n(i))$ the connecting homomorphism associated to the short exact sequence of $\Gal(K)$-modules:
\[ 1 \rightarrow \Z/n(i) \rightarrow \Z/n^2(i) \rightarrow \Z/n(i) \rightarrow 1. \]

Observe that the isomorphism $\H^i(K,\Lambda) \rightarrow \H^i(K,\Lambda(j))$, induced by $\omega$, is given by 
\[ \alpha \mapsto \alpha \cup (\underbrace{\omega \cup \cdots \cup \omega}_{j \ \text{times}}) \]
where we have $\omega \in \mu_n = \H^0(K,\Lambda(1))$.
Furthermore, for $x \in \mu_n = \H^0(K,\Lambda(1))$, we observe that 
\[ x_\omega \cup \omega = \delta_1(x) \]
since the Kummer homomorphism $K^\times = \H^0(K,(K^{\rm sep})^{\times}) \rightarrow \H^1(K,\Lambda(1))$ is precisely the connecting homomorphism associated to:
\[ 1 \rightarrow \mu_n \rightarrow (K^{\rm sep})^\times \xrightarrow{n} (K^{\rm sep})^\times \rightarrow 1. \]
To prove the lemma it suffices prove the following claim: for all $\alpha \in \H^1(K,\Lambda)$, one has
\[ \delta_0(\alpha) \cup \omega \cup \omega = \delta_1(\omega) \cup (\alpha \cup \omega). \]
Indeed, then one would have:
\begin{align*}
\delta_0(\alpha) \cup \omega \cup \omega &= \delta_1(\omega) \cup (\alpha \cup \omega) \\
&= (\omega_\omega \cup \omega) \cup (\alpha \cup \omega) \\
&= (\omega_\omega \cup \alpha) \cup \omega \cup \omega
\end{align*}
which implies that $\delta_0(\alpha) = \omega_\omega \cup \alpha$.

To prove this claim, we calculate using the well-known elementary identities involving cup products and connecting homomorphisms (see e.g. \cite{Neukirch2008} Proposition 1.4.3):
\begin{align*}
\delta_0(\alpha) \cup \omega \cup \omega &= \delta_1(\alpha \cup \omega) \cup \omega \\
&= \delta_1(\omega \cup \alpha) \cup \omega \\
&= \delta_1(\omega) \cup (\alpha \cup \omega).
\end{align*}
Thus we obtain the claim and the lemma follows.
\end{proof}

\begin{lemma}
\label{lem:ker-of-inf-galois}
Let $K$ be a field whose characteristic is prime to $n$ with $\mu_n \subset K$ and let $\omega \in \mu_n$ be a primitive $n$th root of unity.
Then the kernel of $\inf_K : \H^2(\gfrak_1(K)) \rightarrow \H^2(\Gal(K))$ is generated by elements of the form:
\begin{enumerate}
\item $x_\omega \cup (1-x)_\omega$
\item $x_\omega \cup \omega_\omega + \beta x_\omega$
\end{enumerate}
as $x \in K\smallsetminus \{0,1\}$ vary.
\end{lemma}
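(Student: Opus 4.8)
The plan is to compute the kernel of $\inf_K$ directly from the explicit presentation of $\H^2(\gfrak_1(K))$ furnished by Lemma \ref{lemma-perf-pairing-H2}, using the Merkurjev--Suslin theorem \cite{Merkurjev1982} as the one essential input. Write $q \colon (\Kfr \otimes \Kfr) \oplus \Kfr \twoheadrightarrow \H^2(\gfrak_1(K))$ for the surjection of Lemma \ref{lemma-perf-pairing-H2}, given by $(x \otimes y) \oplus z \mapsto x \cup y + \beta z$, and set $\Theta := \inf_K \circ q$. Since $q$ is surjective, one has $\ker(\inf_K) = q(\ker \Theta)$: the inclusion $q(\ker\Theta) \subseteq \ker\inf_K$ is immediate, and conversely any $\xi \in \ker\inf_K$ admits a $q$-preimage $v$, which then satisfies $\Theta(v) = \inf_K(\xi) = 0$, so that $\xi = q(v) \in q(\ker\Theta)$. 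Thus it suffices to exhibit generators for $\ker\Theta$ and push them through $q$.

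First I would make $\Theta$ explicit. Inflation is a ring homomorphism and restricts to an isomorphism on $\H^1$ (as recalled above), so $\inf_K(x \cup y) = x_\omega \cup y_\omega$ in $\H^2(\Gal(K))$; and Lemma \ref{lem:bock-in-galois-cohom} gives $\inf_K(\beta z) = \omega_\omega \cup z$. Consequently $\Theta$ factors as $\Theta = \kappa \circ \psi$, where
\[ \kappa \colon \Kfr \otimes \Kfr \to \H^2(\Gal(K)), \quad u \otimes v \mapsto u \cup v \]
is the cup-product map on Kummer classes, and
\[ \psi \colon (\Kfr \otimes \Kfr) \oplus \Kfr \to \Kfr \otimes \Kfr, \quad v \oplus z \mapsto v + \omega_\omega \otimes z \]
is the evident surjection.

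The heart of the matter is to identify $\ker\kappa$. Via the isomorphisms $\mu_n \cong \Lambda$ and $\mu_n^{\otimes 2} \cong \Lambda$ induced by $\omega$, the map $\kappa$ is the mod-$n$ Galois symbol $K^{\mathrm M}_2(K)/n \to \H^2(\Gal(K))$ precomposed with the natural surjection $\Kfr \otimes \Kfr \twoheadrightarrow K^{\mathrm M}_2(K)/n$. By the Merkurjev--Suslin theorem the Galois symbol is an isomorphism, so $\kappa$ is surjective and $\ker\kappa$ is generated by the Steinberg elements $x_\omega \otimes (1-x)_\omega$, $x \in K \smallsetminus \{0,1\}$. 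Since $\psi$ is surjective with kernel generated by $(-\omega_\omega \otimes x_\omega) \oplus x_\omega$ ($x \in K^\times$), the preimage $\ker\Theta = \psi^{-1}(\ker\kappa)$ is generated by these elements of $\ker\psi$ together with the evident lifts $(x_\omega \otimes (1-x)_\omega) \oplus 0$ of the Steinberg generators.

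It remains to apply $q$ to these generators. One computes $q\bigl((x_\omega \otimes (1-x)_\omega) \oplus 0\bigr) = x_\omega \cup (1-x)_\omega$, which is family (1), and
\[ q\bigl((-\omega_\omega \otimes x_\omega) \oplus x_\omega\bigr) = -\,\omega_\omega \cup x_\omega + \beta x_\omega = x_\omega \cup \omega_\omega + \beta x_\omega, \]
using that cup products of degree-one classes anticommute; this is family (2). Hence $\ker\inf_K = q(\ker\Theta)$ is generated by the two stated families. The only nontrivial ingredient is the appeal to Merkurjev--Suslin to pin down $\ker\kappa$; the remaining work is the bookkeeping of the factorization $\Theta = \kappa \circ \psi$, the Bockstein identity of Lemma \ref{lem:bock-in-galois-cohom}, and the sign coming from graded-commutativity, all of which I expect to be routine.
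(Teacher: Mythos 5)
Your proof is correct and takes essentially the same route as the paper: both invoke the Merkurjev--Suslin theorem to identify the kernel of the composite map $\Kfr \otimes \Kfr \xrightarrow{\cup} \H^2(\gfrak_1(K)) \xrightarrow{\inf_K} \H^2(\Gal(K))$ as generated by the Steinberg elements $x_\omega \otimes (1-x)_\omega$, and then combine this with the presentation of Lemma \ref{lemma-perf-pairing-H2} and the Bockstein identity of Lemma \ref{lem:bock-in-galois-cohom}. Your factorization $\Theta = \kappa \circ \psi$ (together with the sign from graded-commutativity and the harmless discrepancy between $x \in K^\times$ and $x \in K\smallsetminus\{0,1\}$, which only drops the zero element at $x=1$) is exactly the bookkeeping the paper compresses into ``the claim easily follows.''
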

\begin{proof}
By the Merkurjev-Suslin theorem \cite{Merkurjev1982}, the composition 
\[ \H^1(\gfrak_1(K)) \otimes \H^1(\gfrak_1(K)) \xrightarrow{\cup} \H^2(\gfrak_1(K)) \xrightarrow{\inf} \H^2(\Gal(K)) \]
is surjective with kernel generated by elements of the form $x_\omega \otimes (1-x)_\omega$.
Thus, the claim easily follows from Lemma \ref{lemma-perf-pairing-H2} and Lemma \ref{lem:bock-in-galois-cohom}.
\end{proof}

\subsection{The Proof of Theorem \ref{thm:main-thm-ffrak} -- Construction of $\Omega_K$}
\label{sec:proof-theor-refthm-omegaK}
For simplicity, we denote $K\smallsetminus\{0,1\}$ by $\Kbb$.
Furthermore, consider the set
\[ R(K) := \{x_\omega \cup (1-x)_\omega\}_{x \in \Kbb} \cup \{x_\omega \cup \omega_\omega + \beta x_\omega \}_{x \in \Kbb} \subset H^2(\gfrak_1(K)). \]
Then one has a surjective function
\[ \Mcal_K : \Kbb \sqcup \Kbb \rightarrow R(K), \]
which is defined by sending $x$ in the first $\Kbb$ component to $x_\omega \cup (1-x)_\omega$ and $x$ in the second $\Kbb$ component to $x_\omega \cup \omega_\omega + \beta x_\omega$.
By Lemma \ref{lem:ker-of-inf-galois}, $R(K)$ is a generating set for $\ker(\H^2(\gfrak_1(K)) \rightarrow \H^2(\Gal(K))$.

The surjective function $\Mcal_K$ induces an \emph{injective} homomorphism:
\[ \Mcal_K^* : \Fun(R(K),\Lambda) \hookrightarrow \Fun(\Kbb \sqcup \Kbb,\Lambda) = \Fun(\Kbb,\Lambda)^2 = \Fun(\Kbb,\Lambda^2). \]
Explicitly, this homomorphism sends a function $f : R(K) \rightarrow \Lambda$ to the function $\Kbb \rightarrow \Lambda^2$ defined by:
\[x \mapsto (f(x_\omega \cup (1-x)_\omega), \ f(x_\omega \cup \omega + \beta x_\omega)). \]

Now we consider the inclusion induced by $\Omega_{R(K)}$ of Theorem \ref{thm:main-general-groups} composed with $\Mcal_K^*$ from above:
\[ \Omega_K :  \gfrak_2(K) \xrightarrow{\Omega_{R(K)}} \Sfr_{R(K)} \subset \Fun(R(K),\Lambda) \xrightarrow{\Mcal_K^*} \Fun(\Kbb,\Lambda^2). \]
This map will induce our required isomorphism $\Omega_K$ between $\gfrak_2(K)$ and $\Ffrak_K^\omega \subset \Fun(\Kbb,\Lambda^2)$.
Namely, it remains to prove that the image of $\Omega_K$, as defined above, is precisely $\Ffrak_K^\omega$.
In other words, now we need to trace through the maps to describe $\Omega_K([\sigma,\tau])$ and $\Omega_K(\sigma^\pi)$ as functions $\Kbb \rightarrow \Lambda^2$.

Let $\sigma,\tau \in \gfrak_1(K)$ be given.
We first recall that by Theorem \ref{thm:main-general-groups}:
\begin{enumerate}
\item $\Omega_{R(K)}([\sigma,\tau]) = \res_{R(K)}(\sigma\otimes\tau-\tau\otimes\sigma,0)$.
\item $\Omega_{R(K)}(\sigma^\pi) = \res_{R(K)}({n \choose 2} \sigma \otimes \sigma,\sigma)$.
\end{enumerate}
Next we calculate $(\bullet,\bullet)_\Sfr$ of Lemma \ref{lemma-perf-pairing-H2}, applied to our special elements of $\Sfr$ and arbitrary elements of $R(K)$, recalling that $\sigma(x_\omega) = \sigma^\omega(x)$ for $\sigma \in \gfrak_1(K)$ and $x \in K^\times$:
\begin{enumerate}
\item $((\sigma\otimes\tau-\tau\otimes\sigma,0),x_\omega \cup (1-x)_\omega)_{\Sfr} = \sigma^\omega(x)\tau^\omega(1-x)-\tau^\omega(x)\sigma^\omega(1-x)$ and 
\item $((\sigma\otimes\tau-\tau\otimes\sigma,0),x_\omega \cup \omega_\omega + \beta x_\omega)_{\Sfr} = \sigma^\omega(x)\tau^\omega(\omega)-\tau^\omega(x)\sigma^\omega(\omega)$.
\item $(({n \choose 2}\sigma \otimes \sigma,\sigma),x_\omega \cup (1-x)_\omega)_\Sfr = {n \choose 2}\sigma^\omega(x)\sigma^\omega(1-x)$ and
\item $(({n \choose 2}\sigma \otimes \sigma,\sigma),x_\omega \cup \omega_\omega +\beta x_\omega)_\Sfr = {n \choose 2}\sigma^\omega(x)\sigma^\omega(\omega) + \sigma^\omega(x)$.
\end{enumerate}
The final ingredient is the definition of $\Mcal_K^*$, which sends a function $f : R(K) \rightarrow \Lambda$ to the function $x \mapsto (f(x_\omega \cup (1-x)_\omega),f(x_\omega \cup \omega_\omega+\beta x_\omega))$.
Now it's a simple matter of putting everything together.
Namely, for $\sigma,\tau \in \gfrak_1(K)$, $x \in \Kbb$, and $\Omega_K$ as defined above, one has:
\begin{enumerate}
\item $\Omega_K([\sigma,\tau])(x) = (\sigma^\omega(x)\tau^\omega(1-x)-\sigma^\omega(1-x)\tau^\omega(x), \ \sigma^\omega(x)\tau^\omega(\omega)-\sigma^\omega(\omega)\tau^\omega(x))$.
\item $\Omega_K(\sigma^\pi)(x) = ({n \choose 2} \cdot \sigma^\omega(x)\sigma^\omega(1-x), \ {n \choose 2}\sigma^\omega(x)\sigma^\omega(\omega)+\sigma^\omega(x))$.
\end{enumerate}
so that $\Omega_K([\sigma,\tau]) = \Phi^\omega(\sigma^\omega,\tau^\omega)$ and $\Omega_K(\sigma^\pi) = \Psi^\omega(\sigma^\omega)$.
Since $\gfrak_2(K)$ is (topologically) generated by elements of the form $[\sigma,\tau]$ and $\sigma^\pi$ as $\sigma,\tau \in \gfrak_1(K)$ vary, $\Ffrak_K^\omega$ is generated by $\Phi^\omega(f,g)$ and $\Psi^\omega(f)$ as $f,g \in \Hom(K^\times,\Lambda)$ vary, and $(\bullet)^\omega : \gfrak_1(K) \rightarrow \Hom(K^\times,\Lambda)$  is an isomorphism, we see that $\Ffrak_K^\omega$ is indeed the image of $\Omega_K$.
Thus, the calculation above completes the proof that $\Omega_K$ is an isomorphism between $\gfrak_2(K)$ and $\Ffrak_K^\omega$ which satisfies the requirements of Theorem \ref{thm:main-thm-ffrak}.

\subsection{The Proof of Theorem \ref{thm:main-thm-ffrak} -- Functoriality}
\label{sec:proof-theor-functoriality}
As noted above, $\gfrak_2(K)$ is topologically generated by elements of the form $[\sigma,\tau]$ and $\sigma^\pi$, as $\sigma,\tau \in \gfrak_1(K)$ vary, while $\Ffrak_K^\omega$ is (defined to be) topologically generated by $\Phi^\omega(f,g)$ and $\Psi^\omega(f)$, as $f,g \in \Hom(K^\times,\Lambda)$ vary.
Also recall that the image of $\Omega_K$ is precisely $\Ffrak_K^\omega$, so that $\Omega_K : \gfrak_2(K) \rightarrow \Ffrak_K^\omega$ is our desired isomorphism.
The functoriality in Theorem \ref{thm:main-thm-ffrak} now follows immediately from the fact that, for all fields $K$ as in the theorem, $\Omega_K$ is an isomorphism which sends $[\sigma,\tau]$ to $\Phi^\omega(\sigma^\omega,\tau^\omega)$ and $\sigma^\pi$ to $\Psi^\omega(\sigma^\omega)$.

More precisely, let us consider a situation where $K \hookrightarrow L$ is a field extension.
Then one obtains a canonical map $\phi : \Gal(L) \rightarrow \Gal(K)$ which induces canonical maps $\phi_i : \gfrak(L)_i \rightarrow \gfrak_i(K)$ for $i = 1,2$, which are compatible with $[\bullet,\bullet]$ and $(\bullet)^\pi$.
I.e. if $\sigma,\tau \in \gfrak_1(K)$, then $\phi_2([\sigma,\tau]) = [\phi_1(\sigma),\phi_1(\tau)]$ and $\phi_2(\sigma^\pi) = (\phi_1(\sigma))^\pi$.
Furthermore, one has $\res_K(\Phi^\omega(f,g)) = \Phi^\omega(f|_{K^\times},g|_{K^\times})$ and $\res_K(\Psi^\omega(f)) = \Psi^\omega(f|_{K^\times})$.
Lastly, $(\phi_1\sigma)^\omega = (\sigma^\omega)|_{K^\times}$.
Combining all these compatibility properties, along with the fact that $\Omega_K$ and $\Omega_L$ as both isomorphisms, we obtain the desired functoriality.

We now give an alternative proof of functoriality which uses the discussion of Remark \ref{remark:functoriality-general}.
First, observe that the canonical homomorphism $\phi_1^* : \H^2(\gfrak_1(K)) \rightarrow \H^2(\gfrak_1(L))$ restricts to a map $R(K) \rightarrow R(L)$, where $R(K)$ and $R(L)$ are as defined in \S\ref{sec:proof-theor-refthm-omegaK}.
Thus, as in Remark \ref{remark:functoriality-general}, we obtain a homomorphism $\phi_1^{**} : \Sfr_{R(L)} \rightarrow \Sfr_{R(K)}$ which is compatible with $\phi_2 : \gfrak_2(L) \rightarrow \gfrak_2(K)$ in the sense that the following diagram commutes:
\[
\xymatrix{
\gfrak_2(L) \ar[d]_{\phi_2} \ar[r]^{\Omega_{R(L)}} & \Sfr_{R(L)} \ar[d]^{\phi_1^{**}} \\
\gfrak_2(K)  \ar[r]_{\Omega_{R(K)}} & \Sfr_{R(K)}
}
\]
and the following compatibility properties hold:
\begin{enumerate}
\item $\phi_1^{**}(\Omega_{R(L)}([\sigma,\tau])) = \Omega_{R(K)}(\phi_2[\sigma,\tau]) = \Omega_{R(K)}([\phi_1\sigma,\phi_1\tau])$.
\item $\phi_1^{**}(\Omega_{R(L)}(\sigma^\pi)) = \Omega_{R(K)}(\phi_2(\sigma^\pi)) = \Omega_{R(K)}((\phi_1\sigma)^\pi)$.
\end{enumerate}

If we define $\Lbb := L \smallsetminus\{0,1\}$, then the map $R(K) \rightarrow R(L)$ is compatible with the embedding $\Kbb \hookrightarrow \Lbb$ in the sense that the following diagram commutes:
\[ 
\xymatrix{
\Kbb \sqcup \Kbb \ar@{^{(}->}[r]^{K \subset L} \ar[d]_{\Mcal_K} &\Lbb \sqcup \Lbb \ar[d]^{\Mcal_L}\\
R(K) \ar[r]_{\phi_1^*} & R(L)
}
\]
Thus, we obtain the following commutative diagram:
\[
\xymatrix{
\gfrak_2(L) \ar[d]_{\phi_2} \ar[r]^{\Omega_{R(L)}} & \Sfr_{R(L)} \ar[d]_{\phi_1^{**}} \ar@{^{(}->}[r]^-{\rm incl.} & \Fun(R(L),\Lambda) \ar@{^{(}->}[r]^-{\Mcal_L^*} \ar[d]^{\phi_1^{**}} & \Fun(\Lbb,\Lambda^2) \ar[d]^{(K \subset L)^*}\\
\gfrak_2(K)  \ar[r]_{\Omega_{R(K)}} & \Sfr_{R(K)} \ar@{^{(}->}[r]^-{\rm incl.} & \Fun(R(K),\Lambda) \ar@{^{(}->}[r]^-{\Mcal_K^*} & \Fun(\Kbb,\Lambda^2)
}
\]
and functoriality follows since $\Omega_L$ resp. $\Omega_K$ is the composition of the maps on the top resp. bottom row of the diagram above.
This completes the proof of Theorem \ref{thm:main-thm-ffrak}.

\section{Relations in Abelian-by-Central Galois Groups}
\label{sec:applications}
Theorem \ref{thm:main-thm-ffrak} can be used to determine every relation that occurs among the elements $[\sigma,\tau]$ and $\sigma^\pi$ within $\gfrak_2(K)$.
Below is an example of a result which restricts the types of relations that occur in these Galois groups. 
The following theorem proves, among other things, that every relation in $\gfrak_2(K)/(2\cdot \gfrak_2(K)^\pi)$, between elements of the form $[\sigma,\tau]$, can be determined by looking only at homomorphisms from $\Gal(K)/(\Gal(K)^{(3)} \cdot \Gal(K)^{2n})$ to $\Hcal_\Lambda$, the Heisenberg group over $\Lambda$.

First, we briefly recall some facts concerning convergence of elements in $\gfrak_1(K)$ and $\gfrak_2(K$).
Let $K$ be a field whose characteristic is prime to $n$ with $\mu_n \subset K$, and choose $\omega \in \mu_n$ a primitive $n$th root of unity.
Let $\sigma_i,\tau_i,\gamma_j \in \gfrak_1(K)$ be given, with $i,j$ varying over possibly infinite indexing sets.
Assume that the collection $([\sigma_i,\tau_i],\gamma_j^\pi)_{i,j}$ converges to $0$ in $\gfrak_2(K)$.
Then the following three sums are well-defined elements of $\gfrak_2(K)$:
\begin{enumerate}
\item  $\sum_i[\sigma_i,\tau_i] + \sum_j\gamma_j^\pi$.
\item $\sum_i[\sigma_i,\tau_i]$.
\item $\sum_j\gamma_j^\pi$.
\end{enumerate}
Moreover, recall that, if $(\sigma_i,\tau_i,\gamma_j)_{i,j}$ converges to $0$ in $\gfrak_1(K)$, then the collection $([\sigma_i,\tau_i],\gamma_j^\pi)_{i,j}$ converges to $0$ in $\gfrak_2(K)$.
The connection with Kummer theory is as follows: a collection $(\sigma_i)_i$, of elements $\sigma_i \in \gfrak_1(K)$, converges to $0$ if and only if, for all $x \in K^\times$, all but finitely many of the $\sigma_i^\omega(x)$ vanish.

Recall that $\Fun(K\smallsetminus\{0,1\},\Lambda^2)$, endowed with the compact-open topology, is naturally an abelian profinite group.
In this case, a collection $(f_i)_i$ of elements $f_i \in \Fun(K\smallsetminus\{0,1\},\Lambda^2)$ converges to $0$ if and only if, for all $x \in K \smallsetminus\{0,1\}$, all but finitely many of the $f_i(x)$ vanish.
In this case, the sum $\sum_i f_i$ yields a well-defined element of $\Fun(K\smallsetminus\{0,1\},\Lambda^2)$ which is the function defined by $x \mapsto \sum_i f_i(x)$; since all but finitely many of the $f_i(x)$ are $0$, the sum $\sum_i f_i(x)$ is well-defined element in $\Lambda^2$.

Lastly, recall that the isomorphism $\Omega_K : \gfrak_2(K) \rightarrow \Ffrak_K^\omega$ from Theorem \ref{thm:main-thm-ffrak} is an isomorphism of abelian profinite groups.
In particular, $\Omega_K$ identifies $\gfrak_2(K)$ with a closed subgroup of $\Fun(K\smallsetminus\{0,1\},\Lambda^2)$.
Thus, we see that the collection $([\sigma_i,\tau_i],\gamma_j^\pi)_{i,j}$ converges to $0$ in $\gfrak_2(K)$ if and only if 
\[ (\Omega_K([\sigma_i,\tau_i]),\Omega_K(\gamma_j^\pi))_{i,j} = (\Phi^\omega(\sigma_i^\omega,\tau_i^\omega),\Psi^\omega(\gamma_j^\omega))_{i,j}\]
converges to $0$ in $\Fun(K\smallsetminus\{0,1\},\Lambda^2)$.

In Theorem \ref{thm:application-to-relations} below, the various statements deal with possibly infinite sums.
However, one of the assumptions of the theorem, that $(\sigma_i,\tau_i)_i$ converges to $0$ in $\gfrak_1(K)$, ensures that all of these possibly infinite sums are actually well-defined elements in their respective profinite groups, as discussed above.
Because we use it in the statement of Theorem \ref{thm:application-to-relations}, we also recall the well-known fact that the map $2 \cdot (\bullet)^\pi : \gfrak_1 \rightarrow \gfrak_2$ is $\Lambda$-linear; this fact also follows easily from Theorem \ref{thm:main-general-groups}.

\begin{theorem}
\label{thm:application-to-relations}
Let $K$ be a field whose characteristic is prime to $n$ with $\mu_{2n} \subset K$.
Choose a primitive $n$th root of unity $\omega \in \mu_n$.
Let $\sigma_i,\tau_i \in \gfrak_1(K)$ be given with the indexing set for $i$ possibly infinite, and assume that $(\sigma_i,\tau_i)_i$ converges to $0$ in $\gfrak_1(K)$.
Then the following are equivalent:
\begin{enumerate}
\item $\sum_i(\sigma_i^\omega(x) \cdot \tau_i^\omega(1-x) - \sigma_i^\omega(1-x) \cdot\tau_i^\omega(x)) = 0$ for all $x \in K \smallsetminus\{0,1\}$.
\item $\sum_i [\sigma_i,\tau_i] = \sum_i (b_i \cdot (2 \cdot \sigma_i^\pi) - a_i \cdot(2 \cdot \tau_i^\pi))$ where $\sigma_i^\omega(\omega) = 2\cdot a_i$ and $\tau_i^\omega(\omega) = 2\cdot b_i$ (such $a_i,b_i \in \Lambda$ exist since $\omega \in K^{\times 2}$).
\item $\sum_i [\sigma_i,\tau_i] \in \langle 2 \cdot \sigma_i^\pi,2 \cdot \tau_i^\pi \rangle_i$.
\item $\sum_i [\sigma_i,\tau_i] \in 2 \cdot \gfrak_1(K)^\pi$.
\item For all homomorphisms $\phi : \Gal(K)/\Gal(K)^{(3)} \rightarrow \Hcal_\Lambda$, one has $\sum_i[\phi_1\sigma_i,\phi_1\tau_i] = 0$, as elements of $\gfrak_2(\Hcal_\Lambda)$.
\item $\sum_i(\sigma_i^\omega(x) \cdot \tau_i^\omega(y) - \sigma_i^\omega(y) \cdot\tau_i^\omega(x)) = 0$, for all $x,y \in K^\times$ such that the image of $x_\omega \cup y_\omega$ vanishes in $\H^2(\Gal(K))$.
\item For all $x \in K \smallsetminus\{0,1\}$, there exists some solution $\phi : \Gal(K)/\Gal(K)^{(3)} \rightarrow \Hcal_\Lambda$ to the embedding problem $\mathcal{E}_{x,1-x}$, with $\sum_i[\phi_1\sigma_i,\phi_1\tau_i] = 0$, as elements of $\gfrak_2(\Hcal_\Lambda)$.
\end{enumerate}
\end{theorem}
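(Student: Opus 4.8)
The plan is to transport every statement through the isomorphism $\Omega_K \colon \gfrak_2(K) \to \Ffrak_K^\omega$ of Theorem \ref{thm:main-thm-ffrak}, turning each assertion about $\gfrak_2(K)$ into an identity of $\Lambda^2$-valued functions on $K\smallsetminus\{0,1\}$. The whole argument hinges on the elementary congruence $2{n \choose 2} = n(n-1) \equiv 0 \pmod n$. Combined with the formula for $\Psi^\omega$ and the fact that $2\cdot(\bullet)^\pi$ is $\Lambda$-linear, this congruence shows that for any $\gamma \in \gfrak_1(K)$ the element $2\cdot\gamma^\pi$ is sent by $\Omega_K$ to the strikingly simple function $x \mapsto (0,\ 2\cdot\gamma^\omega(x))$, whose first coordinate vanishes identically. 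This single observation couples the ``commutator'' and ``power'' parts of $\gfrak_2(K)$, and it drives the equivalence of (1)--(4).

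For (1) $\Rightarrow$ (2) I would compute $\Omega_K(\sum_i[\sigma_i,\tau_i]) = \sum_i \Phi^\omega(\sigma_i^\omega,\tau_i^\omega)$ coordinate by coordinate. Using $\sigma_i^\omega(\omega) = 2a_i$ and $\tau_i^\omega(\omega) = 2b_i$, the second coordinate of this function is $\sum_i 2(b_i\sigma_i^\omega(x) - a_i\tau_i^\omega(x))$, which is exactly the second coordinate of $\Omega_K(\sum_i(b_i\cdot 2\sigma_i^\pi - a_i\cdot 2\tau_i^\pi))$ computed via the simple-function description above; thus the two sides of (2) agree in the second coordinate unconditionally, and their first coordinates agree precisely when the sum in (1) vanishes for every $x$. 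Since $\Omega_K$ is an isomorphism, (1) $\Leftrightarrow$ (2). The implications (2) $\Rightarrow$ (3) $\Rightarrow$ (4) are immediate, the latter using the $\Lambda$-linearity of $2\cdot(\bullet)^\pi$ together with convergence of $(\sigma_i,\tau_i)_i$ to $0$. Finally (4) $\Rightarrow$ (1) is read off at once: writing $\sum_i[\sigma_i,\tau_i] = 2\cdot\gamma^\pi$ and applying $\Omega_K$, the first coordinate of the image is $0$ by the simple-function computation, which is precisely (1).

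Next I would insert condition (6), which is the crux. The implication (6) $\Rightarrow$ (1) is trivial: specialize $y = 1-x$, since $x_\omega \cup (1-x)_\omega$ vanishes in $\H^2(\Gal(K))$ by the Steinberg relation (Lemma \ref{lem:ker-of-inf-galois}). For the converse I would prove (4) $\Rightarrow$ (6) by pairing against cohomology. Given $x,y \in K^\times$ with $x_\omega \cup y_\omega$ vanishing in $\H^2(\Gal(K))$, exactness of the inflation sequence provides $u \in \H^1(\Gal(K)^{(2)})^{\gfrak_1(K)}$ with $d_2 u = x_\omega \cup y_\omega$, and Theorem \ref{thm:cocycle-calculations} evaluates $-(\sum_i[\sigma_i,\tau_i],u)_{\gfrak_2}$ as the alternating sum appearing in (6). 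Substituting $\sum_i[\sigma_i,\tau_i] = 2\cdot\gamma^\pi$ and applying Theorem \ref{thm:cocycle-calculations} to $\gamma^\pi$ gives $2{n \choose 2}\gamma^\omega(x)\gamma^\omega(y) = 0$, whence the sum in (6) vanishes. Thus (6) joins the equivalence class of (1)--(4).

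It remains to fold in the Heisenberg conditions (5) and (7), which I would handle purely through Propositions \ref{prop:heisenberg} and \ref{prop:heisenberg-embedding-problems}. A homomorphism $\phi \colon \Gal(K)/\Gal(K)^{(3)} \to \Hcal_\Lambda$ is the same datum as a solution of $\mathcal{E}_{x,y}$ for $(x,y) = (e_1,e_2)\circ\phi_1$, and such a solution exists exactly when $x_\omega \cup y_\omega$ vanishes in $\H^2(\Gal(K))$; moreover, by Proposition \ref{prop:heisenberg} the vanishing of $\sum_i[\phi_1\sigma_i,\phi_1\tau_i]$ translates into $\sum_i(\sigma_i^\omega(x)\tau_i^\omega(y) - \sigma_i^\omega(y)\tau_i^\omega(x)) = 0$, a condition independent of the chosen solution. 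Quantifying over all $\phi$ then yields exactly (6), giving (5) $\Leftrightarrow$ (6); specializing $y = 1-x$ (where a solution always exists) and quantifying over all $x$ yields (7) $\Leftrightarrow$ (1), the existential quantifier in (7) being harmless precisely because the translated condition does not see the choice of solution. I expect the main obstacle to be the step (4) $\Rightarrow$ (6): unlike (1), condition (6) ranges over all cup-product relations in $\H^2(\Gal(K))$, not merely the Steinberg ones, so one genuinely needs the cocycle description of the pairing together with the congruence $2{n \choose 2} \equiv 0$ to kill the ``power'' contribution coming from $\gamma^\pi$.
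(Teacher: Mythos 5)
Your proposal is correct, and its treatment of (1)--(4), (5) and (7) coincides with the paper's: the equivalence of (1)--(4) is read off through $\Omega_K$ using $2{n\choose 2}\equiv 0 \pmod n$, and the Heisenberg conditions are handled through Propositions \ref{prop:heisenberg-embedding-problems} and \ref{prop:heisenberg}. The one genuine divergence is how (5) and (6) get attached to the block (1)--(4). The paper proves $(4)\Rightarrow(5)$ by a one-line structural observation inside the Heisenberg group --- every element $2\cdot\gamma^\pi$ of $\gfrak_2(\Hcal_\Lambda)$ is trivial, so $\phi_2$ kills $2\cdot\gfrak_1(K)^\pi$ --- and then deduces (6) from (5) via Proposition \ref{prop:heisenberg}. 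You instead prove $(4)\Rightarrow(6)$ directly: given $x,y$ with $x_\omega\cup y_\omega$ vanishing in $\H^2(\Gal(K))$, you take the unique $u\in\H^1(\Gal(K)^{(2)})^{\gfrak_1(K)}$ with $d_2u=x_\omega\cup y_\omega$ and evaluate the pairing $(\bullet,u)_{\gfrak_2}$ on both sides of $\sum_i[\sigma_i,\tau_i]=2\cdot\gamma^\pi$ using Theorem \ref{thm:cocycle-calculations}, the power contribution dying by the same congruence $2{n\choose 2}\equiv 0$; you then recover $(5)\Leftrightarrow(6)$ as a standalone dictionary, noting that any homomorphism $\Gal(K)/\Gal(K)^{(3)}\to\Hcal_\Lambda$ is tautologically a solution of $\mathcal{E}_{x,y}$ for $(x,y)=(e_1\circ\phi_1,e_2\circ\phi_1)$ and that the vanishing criterion of Proposition \ref{prop:heisenberg} does not see the choice of solution. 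Both routes are sound and rest on the same underlying machinery (Proposition \ref{prop:heisenberg} is itself a consequence of Theorem \ref{thm:cocycle-calculations} via Theorem \ref{thm:main-general-groups}); the paper's is more economical, letting the Heisenberg quotient absorb the congruence, while yours makes transparent why (6) --- which quantifies over \emph{all} vanishing cup products, not just the Steinberg ones --- already follows from (4) without passing through any Heisenberg quotient, and it cleanly separates the cohomological content from the embedding-problem bookkeeping.
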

\begin{proof}
To prove the equivalence of these statements, we will show the following implications:
$(1) \Rightarrow (2) \Rightarrow (3) \Rightarrow (4) \Rightarrow (1)$, $(4) \Rightarrow (5) \Rightarrow (6) \Rightarrow (1)$ and $(5) \Rightarrow (7) \Rightarrow (1)$.

Let us assume (1).
Then by Theorem \ref{thm:main-thm-ffrak}, 
\[ \Omega_K(\sum_i[\sigma_i,\tau_i])(x) = (0,\sum_i(\sigma_i^\omega(x)\tau_i^\omega(\omega)-\sigma_i^\omega(\omega)\tau_i^\omega(x))). \]
On the other hand, recall that $\Omega_K(2 \cdot \sigma^\pi)(x) = (0,2 \cdot \sigma^\omega(x))$.
Since $\omega$ is a square in $K$, we obtain (2) as follows.
Choose $a_i,b_i \in \Lambda$ such that $\sigma_i^\omega(\omega) = 2 \cdot a_i$ and $\tau_i^\omega(\omega) = 2 \cdot b_i$.
Then the equation above implies that:
\[ \sum_i [\sigma_i,\tau_i] = \sum_i( b_i \cdot (2\cdot \sigma_i^\pi) - a_i \cdot (2\cdot \tau_i^\pi)). \]
The implications $(2) \Rightarrow (3) \Rightarrow (4)$ are trivial.

Assume (4).
Then, by Theorem \ref{thm:main-thm-ffrak}, $\Omega_K(\sum_i [\sigma_i,\tau_i]) = (0,2\cdot\gamma_\omega)$ for some $\gamma \in \gfrak_1(K)$.
But then (1) follows immediately from Theorem \ref{thm:main-thm-ffrak} since the first component of $\Omega_K(\sum_i[\sigma_i,\tau_i])(x)$ is precisely 
\[ \sum_i(\sigma_i^\omega(x) \cdot \tau_i^\omega(1-x) - \sigma_i^\omega(1-x) \cdot\tau_i^\omega(x)). \]
Thus (1), (2), (3) and (4) are equivalent.

Assume (4) and let $\phi : \Gal(K)/\Gal(K)^{(3)} \rightarrow \Hcal_\Lambda$ be given as in (5).
Recall that for all $\gamma \in \gfrak_1(\Hcal_\Lambda)$, the element $2 \cdot \gamma^\pi \in \gfrak_2(\Hcal_\Lambda)$ is trivial.
Therefore, $\phi_2(\sum_i [\sigma_i,\tau_i])$ is trivial.
Thus we obtain (5).

The implication $(5) \Rightarrow (6)$ follows immediately from Proposition \ref{prop:heisenberg}, recalling that for $\sigma \in \gfrak_1(K)$ and $x \in K^\times$ one has $\sigma^\omega(x) = \sigma(x_\omega)$. The implication $(6) \Rightarrow (1)$ is obvious since, for all $x \in K\smallsetminus\{0,1\}$, the image of $x_\omega \cup (1-x)_\omega$ is trivial in $\H^2(\Gal(K))$.
Lastly, $(5) \Rightarrow (7)$ is trivial, while $(7) \Rightarrow (1)$ is, again, simply applying Proposition \ref{prop:heisenberg}.
\end{proof}

\subsection{Weakly-Commuting-Liftable Pairs}
\label{sec:aappl-theor-comm}

Let $K$ be a field whose characteristic is relatively prime to $n$ with $\mu_{2n} \subset K$.
We recall the definition of a CL-pair from \cite{Topaz2012c}: a pair of elements $\sigma,\tau \in \gfrak_1(K)$ is called a {\bf commuting-liftable} pair (or a {\bf CL}-pair for short) provided that $[\sigma,\tau] \in \langle 2 \cdot \sigma^\pi, 2 \cdot \tau^\pi \rangle$.
More generally, we will say that $\sigma,\tau$ are a {\bf weakly-commuting-liftable} pair (or {\bf WCL}-pair for short) provided that $[\sigma,\tau] \in 2 \cdot \gfrak_2(K)^\pi$.
Clearly, any CL-pair is a WCL-pair, and thus the condition defining a WCL-pair is \emph{a priori} weaker than the condition defining a CL-pair.
The equivalence of (3) and (4) in Theorem \ref{thm:application-to-relations} immediately implies the \emph{equivalence} of the two notions.
\begin{corollary}
\label{cor:cl-pair}
Let $K$ be a field whose characteristic is relatively prime to $n$ with $\mu_{2n} \subset K$.
Let $\sigma,\tau \in \gfrak_1(K)$ be given.
Then the following are equivalent:
\begin{enumerate}
\item $\sigma,\tau$ form a CL-pair.
\item $\sigma,\tau$ form a WCL-pair.
\end{enumerate}
\end{corollary}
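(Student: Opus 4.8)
The plan is to obtain this as an immediate specialization of Theorem~\ref{thm:application-to-relations} to the case of a single pair. First I would take the indexing set for $i$ to consist of a single element, so that the collection $(\sigma_i,\tau_i)_i$ is just the pair $(\sigma,\tau)$. Since a one-element family trivially converges to $0$ in $\gfrak_1(K)$, the convergence hypothesis of Theorem~\ref{thm:application-to-relations} is automatically satisfied, and every (a priori infinite) sum appearing in that theorem collapses to the corresponding single term.

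With this specialization, condition (3) of Theorem~\ref{thm:application-to-relations} reads $[\sigma,\tau] \in \langle 2\cdot\sigma^\pi, 2\cdot\tau^\pi\rangle$, which is precisely the defining condition for $\sigma,\tau$ to form a CL-pair. Likewise, condition (4) reads $[\sigma,\tau] \in 2\cdot\gfrak_1(K)^\pi$, which is precisely the defining condition for $\sigma,\tau$ to form a WCL-pair. The equivalence of (3) and (4) in Theorem~\ref{thm:application-to-relations} therefore yields the equivalence of the two notions, completing the proof.

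Since the corollary is a direct specialization, I do not expect any genuine obstacle; the only point to verify is the routine bookkeeping that a single-element family meets the hypotheses of Theorem~\ref{thm:application-to-relations} and that the two membership conditions match the CL- resp.\ WCL-definitions verbatim. All the real content --- in particular the equivalence of (3) and (4), which ultimately rests on the Main Theorem and the Heisenberg-group computations of \S\ref{sec:an-exampl-heis} --- has already been established in the proof of that theorem, so nothing further is needed here.
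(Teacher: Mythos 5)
Your proposal is correct and matches the paper's own proof, which likewise deduces the corollary immediately from the equivalence of conditions (3) and (4) in Theorem~\ref{thm:application-to-relations} applied to the single pair $(\sigma,\tau)$. The bookkeeping you spell out (trivial convergence of a one-element family, collapse of the sums, and identification of (3) resp.\ (4) with the CL- resp.\ WCL-conditions) is exactly what the paper leaves implicit.
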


\begin{remark}
\label{remark:cl-vs-wcl}
Let $\ell$ be a prime and let $m \geq 1$ be given.
For a field $K$ consider $\Gc_K := \Gal(K(\ell)|K)$, the maximal pro-$\ell$ Galois group of $K$.
Then, for any $M \gg m$, the main theorems in \cite{Topaz2012c} show how to detect minimized inertia and decomposition subgroups of $\Gc_K/([\Gc_K,\Gc_K] \cdot \Gc_K^{\ell^m})$, using $\Gc_K/([\Gc_K,\Gc_K] \cdot \Gc_K^{\ell^M})$ endowed with the \emph{subset of CL-pairs} in $(\Gc_K/([\Gc_K,\Gc_K] \cdot \Gc_K^{\ell^M}))^2$, as long as $\Char K \neq \ell$ and $\mu_{2 \ell^M} \subset K$.
Moreover, if $m = 1$ then $M = 1$ suffices, and loc.cit. computes an explicit $M$ which works in general, depending on $\ell$ and $m$.

Using Corollary \ref{cor:cl-pair}, we obtain an immediate strengthening of the main results of \cite{Topaz2012c}, by using WCL-pairs instead of CL-pairs.
Namely, one can now detect minimized inertia and decomposition subgroups of $\Gc_K/([\Gc_K,\Gc_K] \cdot \Gc_K^{\ell^m})$, using $\Gc_K/([\Gc_K,\Gc_K] \cdot \Gc_K^{\ell^M})$ endowed with the \emph{subset of WCL-pairs} in $(\Gc_K/([\Gc_K,\Gc_K] \cdot \Gc_K^{\ell^M}))^2$, with $K$ and $M$ as above, since, by Corollary \ref{cor:cl-pair}, this is exactly the set of CL-pairs.
\end{remark}

\appendix
\section{Cocycle calculations}
\label{sec:cocycle-calculations}

The goal for this section will be to provide a complete and self-contained proof of Theorem \ref{thm:cocycle-calculations}.
When dealing with finitely generated pro-$p$ groups, the cocycle calculations that follow are fairly well-known and were carried out, using minimal free pro-$p$ presentations and the Burnside basis theorem, by {\sc Labute} \cite{Labute1967}; see also the exposition in \cite{Neukirch2008} Propositions 3.9.13 and 3.9.14.
We generalize these calculations below by completely avoiding the use of minimal free presentations (which need not exist in general).

In this section we will use the notation of \S\ref{sec:cohom-frat-covers}.
Namely, $\Gfrak$ is an arbitrary profinite group such that $\gfrak_1(\Gfrak) \cong (\Z/n)^I$ for some indexing set $I$, $\gfrak_i := \gfrak_i(\Gfrak)$, and $\Kfr := \H^1(\gfrak_1,\Lambda) = \H^1(\Gfrak,\Lambda)$.
We also canonically identify $\gfrak_1$ with $\Hom(\Kfr,\Lambda)$ via the perfect pairing $\gfrak_1 \times \Kfr \rightarrow \Lambda$.

For $x \in \Kfr$ and $\sigma \in \gfrak_1$, we denote by $f_x(\sigma)$ the unique integer $0 \leq f_x(\sigma) < n$ so that $f_x(\sigma) \mod n = \sigma(x)$.
For $x,y \in \Kfr$ and $\sigma,\tau \in \gfrak_1$, we define:
\[ {\rm U}_{x,y}(\sigma,\tau) := \sigma(x)\cdot\tau(y) \]
and 
\[ {\rm B}_x(\sigma,\tau) := \begin{cases} 0, \ \text{ if } \  f_x(\sigma) + f_x(\tau) < n \\ 1,  \ \text{ if } \  f_x(\sigma) + f_x(\tau) \geq n. \end{cases} \]

It is immediately clear that ${\rm U}_{x,y} : \gfrak_1^2 \rightarrow \Lambda$ is a $2$-cocycle which represents the class of $x \cup y$ in $\H^2(\gfrak_1)$. 
On the other hand, a simple calculation using the definition of the Bockstein map as the connecting homomorphism $\H^1(\gfrak_1) \rightarrow \H^2(\gfrak_1)$ associated to
\[ 1 \rightarrow \Z/n \xrightarrow{n} \Z/n^2 \rightarrow \Z/n \rightarrow 1, \]
shows that ${\rm B}_x : \gfrak_1^2 \rightarrow \Lambda$ is a $2$-cocycle which represents the class of $\beta x \in \H^2(\gfrak_1)$.

Before we proceed, let us make a couple of observations.
First, if $\sigma(x) = \sigma'(x)$ and $\tau(y) = \tau'(y)$ then ${\rm U}_{x,y}(\sigma,\tau) = {\rm U}_{x,y}(\sigma',\tau')$.
Similarly, if $\sigma(x) = \sigma'(x)$ and $\tau(x) = \tau'(x)$, then ${\rm B}_x(\sigma,\tau) = {\rm B}_x(\sigma',\tau')$.
In the remainder of this section, we will write $\gfrak_1$ multiplicatively (contrary to our previous convention), in order to avoid confusion with multiplicative notation involving elements of $\Gfrak$, which we will need below in the proof of Theorem \ref{thm:cocycle-calculations}.

\begin{proposition}
\label{prop:cocycles}
Let $x,y,z \in \Kfr$ and $\sigma,\tau \in \gfrak_1$ be given. 
Then the following identities hold:
\begin{enumerate}
\item ${\rm U}_{x,y}(\sigma,\tau)+{\rm U}_{x,y}(\tau^{-1},\sigma\tau) - {\rm U}_{x,y}(\tau^{-1},\tau) = \sigma(x)\tau(y)-\sigma(y)\tau(x)$.
\item $\sum_{i = 0}^{n-1} {\rm U}_{x,y}(\sigma^i,\sigma) = {n \choose 2} \sigma(x)\sigma(y)$.
\item ${\rm B}_x(\sigma,\tau)+{\rm B}_x(\tau^{-1},\sigma\tau)-{\rm B}_x(\tau^{-1},\tau) = 0$.
\item $\sum_{i= 0}^{n-1} {\rm B}_x(\sigma^i,\sigma) = \sigma(x)$.
\end{enumerate}
\end{proposition}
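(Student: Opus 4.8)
The plan is to verify all four identities by direct computation, exploiting that $\alpha \mapsto \alpha(x)$ and $\alpha \mapsto \alpha(y)$ are homomorphisms $\gfrak_1 \rightarrow \Lambda$. I would dispatch the two statements about ${\rm U}_{x,y}$ first, as they are purely formal. Since $(\sigma\tau)(y) = \sigma(y)+\tau(y)$ and $(\tau^{-1})(x) = -\tau(x)$, identity (1) follows by substituting ${\rm U}_{x,y}(\alpha,\beta) = \alpha(x)\beta(y)$ into the three terms; the two contributions involving $\tau(x)\tau(y)$ cancel, leaving $\sigma(x)\tau(y)-\sigma(y)\tau(x)$. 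For identity (2), I would use $(\sigma^i)(x) = i\cdot\sigma(x)$ to write ${\rm U}_{x,y}(\sigma^i,\sigma) = i\cdot\sigma(x)\sigma(y)$, and then sum, invoking $\sum_{i=0}^{n-1} i = {n \choose 2}$ in $\Lambda$.

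For the two identities about ${\rm B}_x$, the key device is to rewrite the carry cocycle through the integer-valued section $g := f_x : \gfrak_1 \rightarrow \{0,\dots,n-1\}$. The plan is to record the identity
\[ {\rm B}_x(\alpha,\beta) = \tfrac{1}{n}\bigl(g(\alpha)+g(\beta)-g(\alpha\beta)\bigr), \]
valid as an equation of integers because $g(\alpha)+g(\beta) \in \{0,\dots,2n-2\}$ while $g(\alpha\beta)$ is its representative modulo $n$, so the two differ by exactly $n$ times the carry ${\rm B}_x(\alpha,\beta)\in\{0,1\}$. With this formula in hand, identity (3) becomes a telescoping computation: substituting the three terms and using that $\gfrak_1$ is abelian (so $\tau^{-1}\sigma\tau = \sigma$) together with $g(1)=0$, every occurrence of $g$ cancels and the total is $0$. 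Identity (4) is handled the same way: ${\rm B}_x(\sigma^i,\sigma) = \tfrac{1}{n}(g(\sigma^i)+g(\sigma)-g(\sigma^{i+1}))$, and summing over $0 \leq i \leq n-1$ collapses the telescoping difference $g(\sigma^i)-g(\sigma^{i+1})$ to $g(\sigma^0)-g(\sigma^n) = g(1)-g(1) = 0$ (since $\sigma^n = 1$ in $\gfrak_1$), leaving $\tfrac{1}{n}\cdot n\,g(\sigma) = g(\sigma)$.

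The one point requiring care, and the only place I expect any friction, is the bookkeeping between integers and residues in identity (4): the telescoping argument computes the sum of the integer carries ${\rm B}_x(\sigma^i,\sigma)\in\{0,1\}$ and shows it equals the integer $g(\sigma)\in\{0,\dots,n-1\}$ \emph{exactly}, after which one reduces modulo $n$ to conclude that the sum equals $\sigma(x)$ in $\Lambda$. I would therefore be explicit that the telescoping is carried out in $\Z$ before passing to $\Lambda$, so as not to conflate $g(\sigma)$ with its residue prematurely. Everything else is routine substitution, and no genuine obstacle arises.
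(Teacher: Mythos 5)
Your proof is correct, but for the ${\rm B}_x$ identities it takes a genuinely different route from the paper. Parts (1) and (2) coincide with the paper's direct substitution. For (3) and (4), however, the paper argues by hand: identity (3) is a case analysis on whether $\tau(x)=0$ and then on whether $f_x(\sigma)+f_x(\tau)<n$, and identity (4) is a counting argument that sets $a=f_x(\sigma)$, $g=\gcd(a,n)$, observes that $f_x(\sigma^i)$ runs over the multiples $0,g,2g,\dots,(\tfrac{n}{g}-1)g$ each exactly $g$ times, and counts which of those produce a carry. Your structural identity ${\rm B}_x(\alpha,\beta)=\tfrac1n\bigl(f_x(\alpha)+f_x(\beta)-f_x(\alpha\beta)\bigr)$ (valid in $\Z$ because the right-hand side is a multiple of $n$ lying in $\{-(n-1),\dots,2n-2\}$, hence equal to $0$ or $n$, and it equals $n$ precisely when there is a carry) replaces both of these by pure telescoping, using $\tau^{-1}\sigma\tau=\sigma$ (abelianness of $\gfrak_1$), $f_x(1)=0$, and $\sigma^n=1$ in $\gfrak_1$. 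What your approach buys is a single unified mechanism: ${\rm B}_x$ is $\tfrac1n$ times the coboundary of the integer-valued lift $f_x$ of $x$, which is exactly the conceptual reason ${\rm B}_x$ represents the Bockstein class $\beta x$ in the first place; the case analysis and gcd bookkeeping disappear entirely. The paper's proof, in exchange, never divides by $n$ and stays within elementary inequalities on representatives. Your cautionary remark that the telescoping must be carried out in $\Z$ before reducing modulo $n$ is exactly the right care and closes the only potential gap in that route.
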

\begin{proof}
To 1. One has
\begin{align*}
{\rm U}_{x,y}(\sigma,\tau)+{\rm U}_{x,y}(\tau^{-1},\sigma\tau) - {\rm U}_{x,y}(\tau^{-1},\tau) &= \sigma(x)\tau(y)-\tau(x)\cdot (\sigma(y)+\tau(y)) + \tau(x)\tau(y) \\
&= \sigma(x)\tau(y)-\sigma(y)\tau(x).
\end{align*}

To 2. One has
\begin{align*}
\sum_{i = 0}^{n-1} {\rm U}_{x,y}(\sigma^i,\sigma) &= \sum_{i = 0}^{n-1} i \cdot \sigma(x) \sigma(y) \\
&= {n \choose 2} \cdot \sigma(x) \sigma(y).
\end{align*}

To 3. If $\tau(x) = 0$, it follows immediately from the definition that the expression ${\rm B}_x(\sigma,\tau)+{\rm B}_x(\tau^{-1},\sigma\tau)-{\rm B}_x(\tau^{-1},\tau)$ vanishes (all three terms in the expression are $0$ in this case).
Let us therefore assume that $\tau(x) \neq 0$ and thus $f_x(\tau) \neq 0$.
In this case, one has $f_x(\tau^{-1}) = n-f_x(\tau)$ and so ${\rm B}_x(\tau^{-1},\tau) = 1$.
If $f_x(\sigma)+f_x(\tau) < n$, then ${\rm B}_x(\sigma,\tau) = 0$ while ${\rm B}_x(\tau^{-1},\sigma\tau) = 1$ since
\begin{align*}
f_x(\tau^{-1})+f_x(\sigma\tau) &= (n-f_x(\tau))+(f_x(\sigma)+f_x(\tau)) \\
&= n+f_x(\sigma) \geq n.
\end{align*}
If, on the other hand, $f_x(\sigma)+f_x(\tau) \geq n$, then ${\rm B}_x(\sigma,\tau) = 1$ while ${\rm B}_x(\tau^{-1},\sigma\tau) = 0$ since
\begin{align*}
f_x(\tau^{-1})+f_x(\sigma\tau) &= (n-f_x(\tau))+(f_x(\sigma)+f_x(\tau)-n) \\
&= f_x(\sigma) < n.
\end{align*}
In either case, we see that the expression ${\rm B}_x(\sigma,\tau)+{\rm B}_x(\tau^{-1},\sigma\tau)-{\rm B}_x(\tau^{-1},\tau)$ vanishes, as required.

To 4. 
For $a',b' \in \Z_{\geq 0}$, define 
\[ {\rm B}(a',b') := \begin{cases} 0,  \ \text{ if } \  a'+b' < n \\ 1,  \ \text{ if } \  a'+b' \geq n \end{cases} \]
so that ${\rm B}(f_x(\sigma),f_x(\tau)) = {\rm B}_x(\sigma,\tau)$.

Define $a := f_x(\sigma)$.
Let $g = {\rm gcd}(a,n)$ and denote by $e \in \Z$ the integer such that $a = g \cdot e$.
Observe that as $i$ varies from $0$ to $n-1$, the integer $f_x(\sigma^i)$ varies over $0,g,2g,\ldots,(\frac{n}{g}-1) \cdot g$ and each one of these integers occurs precisely $g$ times.
We calculate:
\begin{align*}
\sum_{i = 0}^{n-1} {\rm B}_x(\sigma^i,\sigma) &= \sum_{i = 0}^{n-1} {\rm B}(f_x(\sigma^i),f_x(\sigma)) \\
&= g \cdot \sum_{j=0}^{\frac{n}{g}-1} {\rm B}(j \cdot g, e \cdot g).
\end{align*}
Note that the $j$ with $0 \leq j < \frac{n}{g}$ and $j \cdot g + e \cdot g \geq n$ (equivalently, ${\rm B}(j \cdot g, e \cdot g) = 1$) are precisely the $j$ such that $\frac{n}{g}-e \leq j < \frac{n}{g}$; there are precisely $e$ such integers $j$.
The other $j$ with $0 \leq j < \frac{n}{g}$ (i.e. those $j$ with $j \cdot g + e \cdot g < n$) satisfy ${\rm B}(j \cdot g,e \cdot g) = 0$.
Thus 
\[ \sum_{j=0}^{\frac{n}{g}-1} {\rm B}(j \cdot g, e \cdot g) = e \]
and therefore 
\[ \sum_{i = 0}^{n-1} {\rm B}_x(\sigma^i,\sigma) = g \cdot e = a = \sigma(x). \]
This completes the proof of the proposition.
\end{proof}

We now give a proof of Theorem \ref{thm:cocycle-calculations}.
\begin{proof}[Proof of Theorem \ref{thm:cocycle-calculations}]
Let $\tsigma$ resp. $\ttau \in \Gfrak$ be arbitrary elements, and denote their images in $\gfrak_1$ by $\sigma$ resp. $\tau$.
To simplify the notation, for $\tsigma,\ttau$ as above, we will define ${\rm U}_{x,y}(\tsigma,\ttau) := {\rm U}_{x,y}(\sigma,\tau)$ and ${\rm B}_z(\tsigma,\ttau) := {\rm B}_z(\sigma,\tau)$.

Let $\eta$ be an element of $\ker(\H^2(\gfrak_1) \rightarrow \H^2(\Gfrak))$.
Choose a representation $\eta = \sum_i x_i \cup y_i + \sum_j \beta z_j$ with $x_i,y_i,z_j \in \Kfr$, as in Lemma \ref{lemma-perf-pairing-H2}.
Consider the cocycle $\xi : \Gfrak^2 \rightarrow \Lambda$:
\[ \xi : (\tsigma,\ttau) \mapsto \sum_i {\rm U}_{x_i,y_i}(\tsigma,\ttau) + \sum_j {\rm B}_{z_j}(\tsigma,\ttau). \]
Then $\xi$ is a $2$-cocycle which represents the inflation of $\eta$ to $\Gfrak$.
Since the cohomology class of $\xi$ is trivial in $\H^2(\Gfrak)$, there exists $u : \Gfrak \rightarrow \Lambda$, a cochain with $du = \rho$.
In other words, 
\[ u(\tsigma\ttau) = u(\tsigma) + u(\ttau) - \xi(\tsigma,\ttau). \]
Furthermore, by adding a constant to $u$, we may assume without loss that $u(1) = 0$.
Thus:
\[ u(\tsigma^{-1}) = -u(\tsigma) + \xi(\tsigma^{-1},\tsigma). \]
The restriction of $u$ to $\Gfrak^{(2)}$ is the unique element $u \in \H^1(\Gfrak^{(2)})^{\gfrak_1}$ with $d_2u = \eta$ (see e.g. \cite{Neukirch2008} Propositions 1.6.6 and 2.4.3).
Thus, it suffices to calculate $u(\tsigma^{-1}\ttau^{-1}\tsigma\ttau) = ([\sigma,\tau],u)_{\gfrak_2}$ and $u(\tsigma^n) = (\sigma^\pi,u)_{\gfrak_2}$.

First we calculate $u(\tsigma^n)$:
\begin{align*}
u(\tsigma^n) &= u(\tsigma^{n-1}) + u(\tsigma) - \xi(\tsigma^{n-1},\tsigma) \\
 &= u(\tsigma^{n-2}) + 2\cdot u(\tsigma) - (\xi(\tsigma^{n-2},\tsigma)+\xi(\tsigma^{n-1},\tsigma)) \\
 &= \cdots = n \cdot u(\tsigma) - \sum_{l = 1}^{n-1} \xi(\tsigma^l,\tsigma) \\
 &= -\sum_{l = 1}^{n-1}\xi(\tsigma^l,\tsigma).
\end{align*}
Since $\xi(1,\tsigma) = 0$, we get:
\[ u(\tsigma^n) = -\sum_{l = 0}^{n-1}\xi(\tsigma^l,\tsigma). \]
By Proposition \ref{prop:cocycles}, we see that 
\[ u(\tsigma^n) = -\left( {n \choose 2} \cdot \sum_i\sigma(x_i)\sigma(y_i) + \sum_j \sigma(z_j) \right). \]

Now we calculate $u(\tsigma^{-1}\ttau^{-1}\tsigma\ttau)$:
\begin{align*}
u(\tsigma^{-1}\ttau^{-1}\tsigma\ttau) &= u(\tsigma^{-1})+u(\ttau^{-1}\tsigma\ttau) - \xi(\tsigma^{-1},\ttau^{-1}\tsigma\ttau) \\
&= u(\tsigma^{-1})+u(\ttau^{-1}\tsigma\ttau) - \xi(\tsigma^{-1},\tsigma) \\
&= -u(\tsigma)+u(\ttau^{-1}\tsigma\ttau) +\xi(\tsigma^{-1},\tsigma)-\xi(\tsigma^{-1},\tsigma) \\
&= -u(\tsigma)+u(\ttau^{-1})+u(\tsigma\ttau)-\xi(\ttau^{-1},\tsigma\ttau) \\
&= -u(\tsigma)-u(\ttau)+u(\tsigma\ttau)+\xi(\ttau^{-1},\ttau)-\xi(\ttau^{-1},\tsigma\ttau) \\
&= -u(\tsigma)-u(\ttau)+u(\tsigma)+u(\ttau)+\xi(\ttau^{-1},\ttau)-\xi(\ttau^{-1},\tsigma\ttau)-\xi(\tsigma,\ttau) \\
&= -(\xi(\tsigma,\ttau)+\xi(\ttau^{-1},\tsigma\ttau)-\xi(\ttau^{-1},\ttau)).
\end{align*}
Again by Proposition \ref{prop:cocycles}, we see that 
\[ u(\tsigma^{-1}\ttau^{-1}\tsigma\ttau) = -\left(\sum_i \sigma(x_i)\tau(y_i)-\sigma(y_i)\tau(x_i)\right). \]
This completes the proof of Theorem \ref{thm:cocycle-calculations}.
\end{proof}

\bibliography{../refs} 

\end{document}